\newtheorem{proposition}{Proposition}[section]
\newtheorem{theorem}{Theorem}[section]
\newtheorem{corollary}{Corollary}[section]
\newtheorem{lemma}{Lemma}[section]
\theoremstyle{definition}
\newtheorem{definition}{Definition}[section]
\newtheorem{remark}{Remark}[section]
\begin{document}

\author{Cristian Ida and Paul Popescu}
\title{Coeffective basic cohomologies of $K$--contact and Sasakian manifolds}
\date{}
\maketitle
\title{}

\begin{abstract}
In this paper we define coeffective de Rham cohomology for basic forms on a $K$--contact or Sasakian manifold $M$ and we discuss its relation with usually basic cohomology of $M$. When $M$ is of finite type (for instance it is compact) several inequalities relating some basic coeffective numbers to classical basic Betti numbers of $M$ are obtained. In the case of Sasakian manifolds, we define and study coeffective Dolbeault and Bott-Chern cohomologies for basic forms. Also, in this case, we prove some Hodge decomposition theorems for coeffective basic de Rham cohomology, relating this cohomology with coeffective basic Dolbeault or Bott-Chern cohomology. The notions are introduced in a similar manner with the case of symplectic and K\"{a}hler manifolds.
\end{abstract}

\medskip 
\begin{flushleft}
\strut \textbf{2010 Mathematics Subject Classification:} 53C25; 57R18; 58A12; 58A14.

\textbf{Key Words:} Sasakian manifold, Reeb foliation, differential form, basic cohomology.
\end{flushleft}

\section{Introduction and preliminary notions}
\setcounter{equation}{0}

\subsection{Introduction}

The coeffective cohomology was introduced by Bouch\'{e} in \cite{Bo} for symplectic manifolds. More exactly, a symplectic form $\omega$ defines a special subcomplex of the de Rham complex $(F^\bullet(M),d)$ of differential forms on $M$: it consists of those
forms $\varphi$ which are annihilated by $\omega$, that is, $\varphi\wedge\omega=0$. Since $\omega$ is closed, we have in fact
a subcomplex of $(F^\bullet(M),d)$ whose cohomology is called coeffective. This cohomology is related with the truncated de Rham cohomology by the class $\omega$. Further signifiant developments of coeffective cohomologies in many different contexts (symplectic, K\"{a}hler, (almost) cosymplectic, (almost) contact, quaternionic manifolds) are given by a series of papers of de Andr\'{e}s, Fern\'{a}ndez,  de Le\'{o}n,  Ib\'{a}\~{n}ez,  Menc\'{i}a, Chinea,   Marrero \cite{A-F-L-I-M, C-L-M, C-L-M1, F-I-L, F-I-L1, F-I-L2} and others papers by these authors. For K\"{a}hler manifolds both cohomologies (coeffective cohomology and de Rham cohomology truncated by $[\omega]$) are isomorphic for $p\neq n$, $\dim M=2n$, though in general they are different for non K\"{a}hler symplectic
manifolds \cite{A-F-L-I-M}. For symplectic manifolds of finite type was introduced the coeffective numbers of the symplectic manifold and several inequalities relating them to the Betti numbers. Similar results were obtained in the context of almost contact \cite{C-L-M1, F-I-L1} and quaternionic manifolds \cite{F-I-L2}. Also, a coeffective Dolbeault cohomology for compact K\"{a}hler and indefinite K\"{a}hler manifolds is studied in \cite{I}.

Our aim in this paper is to study the coeffective de Rham, Dolbeault and Bott-Chern cohomologies for basic forms of (compact) $K$--contact or Sasakian manifolds with respect to the Reeb foliation $\mathcal{F}_{\xi}$ of the fundamental Reeb vector field $\xi$, giving a new contribution concerning to basic cohomology of $K$--contact or Sasakian manifolds. Notice that a background about the basic cohomology of $K$--contact and Sasakian manifolds can be found in the Ch. VII from \cite{B-G}. Other developments of basic cohomologies of Sasakian manifolds in a similar direction as in the recent studies for symplectic manifolds, \cite{T-Y1}, are given in \cite{I-P}. 

The structure of paper is the following: 

In the preliminary subsection, following \cite{B-G, Pi} we briefly recall some elementary definitions about basic forms, basic star operator, basic de Rham Laplacian, basic de Rham cohomology on $K$--contact manifolds. 

We notice that if $\eta$ is the contact form of $M$ then $d\eta$ is basic with respect to the Reeb foliation $\mathcal{F}_{\xi}$. Thus, in the Section 2 we begin our study with the coeffective de Rham cohomology for basic forms on a $K$--contact manifold $M$. The main ingredient is given by the isomorphism between the space of basic  differential forms on $M$ and the space of differential forms on the orbit space $M_\xi$ of  $\mathcal{F}_\xi$, which is known that it is symplectic (or K\"{a}hlerian in the Sasakian case). Thus, following the classical study of coeffective cohomology of symplectic manifolds, see \cite{F-I-L}, we define the coeffective basic de Rham cohomology of $M$ and we prove that when $M$ is compact Sasakian 
\begin{displaymath}
H^p(\mathcal{A}_b(M))\cong\widetilde{H}_b^p(M)\,,\,\,\forall\,p\neq n,
\end{displaymath}
where $H^p(\mathcal{A}_b(M))$ denotes the coeffective basic cohomology group of degree $p$ of $M$ and $\widetilde{H}_b^p(M)$ is the subspace of the basic de Rham cohomology group of $M$ consisting of those classes $a\in H_b^p(M)$ such that $a\wedge[d\eta]=0$, or in other words, the truncated basic de Rham cohomology group of degree $p$. Notice that for an arbitrary $K$--contact manifold $H^p(\mathcal{A}_b(M))$ vanishes for every $p\leq n-1$, where $\dim M=2n+1$. Also, using a technique based on the long exact sequence in cohomology associated with an exact short sequence of complexes, we obtain that the coeffective basic de Rham cohomology groups of a $K$--contact or Sasakian manifold manifold $M$ of finite type have finite dimension. In this case, if we denote by $c_b^p(M)=\dim H^p(\mathcal{A}_b(M))$, called the coeffective basic numbers of order $p$ of $M$, then  they satisfy the following inequalities:
\begin{displaymath}
b_b^p(M)-b_b^{p+2}(M)\leq c_b^p(M)\leq b_b^p(M)+b_b^{p+1}(M),\,\,\forall\, p\geq n+1,
\end{displaymath}
where $b^p_b(M)=\dim H^p_b(M)$ is the  basic Betti number of order $p$ of $M$.

As a consequence, for a compact Sasakian manifolds, we deduce that
\begin{displaymath}
c_b^p(M)=b_b^p(M)-b_b^{p+2}(M),\,\,\forall \,p\geq n+1,
\end{displaymath}
which means that the coeffective basic numbers of a compact Sasakian manifold measure the jumps between the basic Betti numbers.

In the end of Section 2, using an exact sequence in cohomology (which is the foliation analogue of the Gysin sequence), we prove that the following isomorphism holds:
\begin{displaymath}
H^p(\mathcal{A}(M))\cong H^{p-1}(\mathcal{A}_b(M)),\,\,\forall\, p=1,\ldots,2n+1,
\end{displaymath}
where $H^\bullet(\mathcal{A}(M))$ is the coeffective de Rham cohomology of $M$ considered in \cite{F-I-L1}.

In Section 3 we consider basic forms with complex valued on a Sasakian manifold $M$ and taking into account that $d\eta$ is a basic form of complex type $(1,1)$, in a similar manner with coeffective Dolbeault cohomology of K\"{a}hler manifolds, see \cite{I}, we define and study coeffective basic Dolbeault cohomology of a Sasakian manifold. We prove that when $M$ is compact 
\begin{displaymath}
H^{r,s}(\mathcal{A}_b(M))\cong\widetilde{H}_b^{r,s}(M)\,,\,\,\forall\,r+s\neq n,
\end{displaymath}
where $H^{r,s}(\mathcal{A}_b(M))$ denotes the coeffective basic cohomology group of type $(r,s)$ of $M$ and $\widetilde{H}_b^{r,s}(M)$ is the  truncated basic Dolbeault cohomology group of type $(r,s)$. In the case when
$M$ is a compact Sasakian manifold, we prove a Hodge decomposition theorem for coeffective basic de Rham cohomology of $M$, relating this cohomology with coeffective basic Dolbeault cohomology of $M$. Also, several inequalities relating the coeffective basic Hodge numbers to the classical basic Hodge numbers are given similarly as for in the de Rham case.

The aim of Section 4 is to construct a coeffective basic Bott-Chern cohomology of a Sasakian manifold $M$. In this sense we firstly define basic Bott-Chern and Aeppli cohomology of $M$ and we obtain a Hodge-Bott-Chern decomposition theorem for basic forms of $M$. Next, in similar manner with the study of coeffective basic de Rham and Dolbeault cohomology of $M$, we define and study a coeffective Bott-Chern cohomology for basic forms on $M$.

The main methods used here are similarly and closely related to those used in \cite{Bo, C-L-M1, F-I-L, F-I-L1, F-I-L2}.

\subsection{Preliminaries}

Let $(M,F,\xi,\eta,g)$ be a $(2n+1)$--dimensional \textit{almost contact manifold}; that is see \cite{Bl1, Bl, B-G, Pi}, $F$ is an $(1,1)$--tensor field, $\eta$ is an $1$--form, $\xi$ is a vector field, and $g$ is a Riemannian metric on $M$ such that
\begin{equation}
\label{defcontact}
F^2=-{\rm Id}+\eta\otimes\xi,\,\eta(\xi)=1,\,\,\,{\rm and}\,\,\,g(FX,FY)=g(X,Y)-\eta(X)\eta(Y)
\end{equation}
for every $X,Y\in\mathcal{X}(M)$, where ${\rm Id}$ is the identity transformation. Then we have $F(\xi)=0$ and $\eta(X)=g(X,\xi)$ for all $X\in\mathcal{X}(M)$. The \textit{fundamental $2$--form} $\Phi$ of $M$ is defined by $\Phi(X,Y)=g(X,FY)$, and the $(2n+1)$--form $\eta\wedge\Phi^n$ is a volume form on $M$. The almost contact metric manifold is said to be: \textit{contact} if $d\eta=\Phi$; \textit{$K$--contact} if it is contact and $\xi$ is Killing; \textit{normal} if $[F,F]+2d\eta\otimes\xi=0$; \textit{Sasakian} if it is contact and normal. If $M$ is Sasakian manifold then it is $K$--contact \cite{Bl1}.

Consider the field $F^0(M)=\mathcal F(M)$ of smooth real valued functions defined on $M$. For each $p =1,\ldots,2n+1$ denote by $F^{p}(M)$ the module of $p$--forms, by $F(M)=\oplus_{p \geq 0}F^{p}(M)$ the exterior algebra of $M$,  and by $\left\langle, \right\rangle$, the natural scalar product on $F(M)$. 

Recall that the differential form $\omega$ on $M$ is called \textit{basic} if it is \textit{horizontal} (that is $\imath_{\xi}\omega=0$, where $\imath_{\xi}$ denotes the interior product with respect to $\xi$) and \textit{invariant} (that is  $\mathcal L_{\xi}\omega=0$, where $\mathcal L_{\xi}$ denotes the Lie derivative with respect to $\xi$). Denote by $F^{p}_{b}(M)$ the subspace of all basic $p$--forms on the manifold $M$. It is a module over the ring $F^{0}_{b}(M)=\mathcal F_{b}(M)$ of \textit{basic functions} on $M$ (that is, $\mathcal{L}_{\xi}f=0$) and let $F_b(M)=\oplus_{p \geq 0} F^{p}_{b}(M)$ be the graded algebra of all basic forms on $M$. By Cartan identity $\mathcal L_{\xi}=d\imath_{\xi}+\imath_{\xi}d$, we easily obtain that the exterior differential of a basic form is also basic, so we can consider the basic differential $d_{b}=d|_{F^{p}_{b}(M)}:F^{p}_{b}(M)\longrightarrow F^{p+1}_{b}(M)$. 

Thus, the basic forms constitute a subcomplex 
$\left(\oplus_{p \geq 0} F^{p}_{b}(M),d_{b}\right)$ 
of the de Rham complex $\left(\oplus_{p \geq 0} F^{p}(M),d\right)$. The cohomology of this subcomplex is defined by
\[
H_{b}(M)=\oplus_{p\geq 0}H^{p}_{b}(M), \quad H^{p}_{b}(M)=\ker \{d_{b}:F^p_b(M)\rightarrow F^{p+1}_b(M)\}/ d_b(F_b^{p-1}(M)).
\]
This cohomology play the role of de Rham cohomology of the orbit space of the $K$--contact manifold $M$ and we call it the \textit{basic de Rham cohomology} or simply the \textit{basic cohomology} of $M$. Moreover, the space of basic cohomology $H^{\bullet}_{b}(M)$ is an invariant of the characteristic foliation $\mathcal F_{\xi}$ and therefore is an invariant of the $K$--contact structure on the manifold $M$. The relation between the basic cohomology $H^{\bullet}_{b}(M)$ and de Rham cohomology $H^{\bullet}(M)$ of the $K$--contact manifold $M$ is the same as in the general case of a foliation generated by a nonsingular Killing vector field (see for instance \cite{To88}, Theorem 10.13, pg. 139). On compact $K$--contact manifolds the basic cohomology groups enjoy some special properties. In particular, there is a transverse Hodge theory \cite{B-G, E-K1, E-K-H}. 

Let $\star$ be the usual star operator on $M$. If $\omega \in F^{p}_{b}(M)$ then 
the $(2n-p)$-form $\imath_{\xi}\star \omega$ is basic. Therefore we can define the \textit{basic star operator} $\star_{b}:F^{p}_{b}(M)\longrightarrow F^{2n-p}_{b}(M)$ by
\begin{eqnarray}\label{e376}
\star_{b}\omega=(-1)^{p}\imath _{\xi} \star \omega.
\end{eqnarray}
Also, the usual scalar product $\left\langle , \right\rangle$ on $F^p(M)$ restricted to basic forms, is denoted by $\left\langle , \right\rangle_b$, and it is given by
\begin{eqnarray}\label{sca}
\left\langle \omega,\theta\right\rangle_{b}=\int_{M}\omega \wedge \star_{b}\theta \wedge \eta,
\end{eqnarray}
for all $\omega,\theta \in F^{p}_{b}(M)$ and we denote by the symbol $A^\ast$ the adjoint of the operator $A: F_b(M) \rightarrow F_b(M)$ with respect to $\left\langle , \right\rangle_b$. As it is well known, the $\left\langle , \right\rangle_b$--adjoint $d^\ast_b$ of $d_b$ satisfies $d^*_{b}=-\star_{b} d_{b} \star_{b}$.

The \textit{basic de Rham Laplacian} $\Delta_b$ is defined in terms of $d_b$ and its adjoint $d_b^*$ by
\begin{equation}
\label{baslap1}
\Delta_b=d_bd_b^\ast+d_b^\ast d_b.
\end{equation}
The space $\mathcal{H}_b^p(M)$ of \textit{basic harmonic $p$--forms} on $M$ is then defined to be the kernel of $\Delta_b:F_b^p(M)\rightarrow F_b^p(M)$, and $\mathcal{H}_b^p(M)=\ker d_b\cap \ker d_b^*$. The transverse Hodge theorem \cite{E-K-H} then says that 
\begin{equation}
\label{hodgedec}
F_b^p(M)={\rm Im\,}d_b\oplus{\rm Im\,}d_b^*\oplus\ker\Delta_b
\end{equation}
see also \cite{CML97}, and each basic cohomology class has an unique harmonic representative, i.e. 
\begin{equation}
\label{harmoniciso}
H^p_b(M)\cong \mathcal{H}_b^p(M).
\end{equation}

\section{Coeffective de Rham cohomology for basic forms}
\setcounter{equation}{0}

Throughout this section $M$ is a (compact) $K$--contact manifold of dimension $2n+1$ and sometimes $M$ is Sasakian. We start with a fundamental result which play an important role for our purpose.
\begin{theorem}
\label{sym}
(\cite{Pi}) Let $M$ be a  $K$--contact manifold of dimension $2n+1$ and $M_{\xi}$ the orbit space of the Reeb foliation $\mathcal{F}_{\xi}$ defined by $\xi$. If $\pi:M\rightarrow M_{\xi}$ is the natural projection then $\pi^*:F^p\left(M_{\xi}\right)\rightarrow F_b^p(M)$ is an isomorphism.
\end{theorem}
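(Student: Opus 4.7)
The statement is the standard assertion that on a Riemannian submersion (or more generally, on the total space of a foliation by orbits of a complete Killing field with reasonable quotient), the pullback identifies forms on the base with the basic forms of the foliation upstairs. My plan is to verify in sequence that $\pi^*$ (i) takes values in $F^p_b(M)$, (ii) is injective, and (iii) is surjective by exhibiting an explicit inverse. Steps (i) and (ii) are essentially formal, while (iii) carries the geometric content.

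For well-definedness, I would observe that $\xi$ is tangent to the fibres of $\pi$, so $\pi_\ast\xi=0$. Hence for any $\omega\in F^p(M_\xi)$, the naturality identities $\imath_\xi\pi^\ast\omega=\pi^\ast(\imath_{\pi_\ast\xi}\omega)=0$ and $\mathcal{L}_\xi\pi^\ast\omega=\pi^\ast(\mathcal{L}_{\pi_\ast\xi}\omega)=0$ show that $\pi^\ast\omega$ is horizontal and invariant, hence basic. Injectivity is immediate: $\pi$ is a surjective submersion, so $\pi^\ast$ is injective at the level of forms (evaluate on any tangent frame at $q\in M_\xi$ coming from a horizontal frame at a preimage point).

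The surjectivity step is the real work. Given $\omega\in F_b^p(M)$, I would define $\widetilde{\omega}\in F^p(M_\xi)$ pointwise: for $q\in M_\xi$ and $v_1,\ldots,v_p\in T_qM_\xi$, pick any $x\in\pi^{-1}(q)$ and any lifts $X_i\in T_xM$ with $\pi_\ast X_i=v_i$, and set $\widetilde{\omega}_q(v_1,\ldots,v_p)=\omega_x(X_1,\ldots,X_p)$. Two independence checks are needed. First, independence of the lifts at a fixed $x$: any two lifts $X_i,X_i'$ differ by a vertical vector, i.e.\ a multiple of $\xi_x$, and by multilinearity the ambiguity is controlled by $\imath_\xi\omega$, which vanishes since $\omega$ is horizontal. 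Second, independence of the choice of point in the fibre: any two such points are joined by a flow line of $\xi$, and $\mathcal{L}_\xi\omega=0$ together with the fact that the flow of $\xi$ maps $\pi$-horizontal lifts to $\pi$-horizontal lifts of the same base vectors guarantees $\omega_x(X_1,\ldots,X_p)$ does not depend on $x\in\pi^{-1}(q)$. By construction $\pi^\ast\widetilde{\omega}=\omega$, and smoothness of $\widetilde{\omega}$ follows from smoothness of $\omega$ by working locally with a smooth horizontal lift of a coordinate frame on $M_\xi$.

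The main obstacle I anticipate is conceptual rather than computational: the orbit space $M_\xi$ need not be a smooth manifold unless the $K$-contact structure is (quasi-)regular, so some care is needed in interpreting $F^p(M_\xi)$. In the regular case the argument above is literally a statement about a principal $S^1$- or $\mathbb{R}$-bundle, and smoothness of $\widetilde{\omega}$ is straightforward; in the quasi-regular case the base is an orbifold and one uses orbifold charts, with the invariance of $\omega$ under the isotropy ensuring descent; in the irregular case one takes $F^p(M_\xi)$ by definition to mean the transverse/basic forms of $\mathcal{F}_\xi$, and the statement becomes tautological. In any case, once surjectivity is established, $\pi^\ast$ is an isomorphism of modules (and in fact of graded differential algebras, since $d$ commutes with $\pi^\ast$).
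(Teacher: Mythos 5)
Your proof is correct and follows essentially the same route as the paper's: injectivity is formal, membership of $\pi^*\omega$ in $F^p_b(M)$ follows from naturality under the flow of $\xi$, and surjectivity is obtained by descending a basic form pointwise, using horizontality for independence of the lifts and $\mathcal{L}_\xi\omega=0$ for independence of the point in the fibre. Your closing remark about regularity of the orbit space is a genuine subtlety that the paper's proof silently assumes away, and your treatment of it is sound.
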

\begin{proof}
Obviously, $\pi^*$ is injective. 

We prove now that for any $\varphi\in F_b^p(M)$ there exists $\varphi^{\prime}\in F^p(M_{\xi})$ such that $\varphi=\pi^*\varphi^{\prime}$. Since $\varphi$ is horizontal (that is $\imath_{\xi}\varphi=0$), the values $\varphi(X_1,\ldots,X_p)$  can be non zero only when the tangent vectors $\{X_1,\ldots,X_p\}\in T_xM$ are orthogonal on $\xi$. But the condition $\mathcal{L}_{\xi}\varphi=0$ shows that $\varphi$ is invariant by the Reeb group $\{\Phi_t\}_{t\in\mathbb{R}}$, that is $\Phi_t^*\varphi=\varphi$. It follows that
\begin{displaymath}
\varphi(\Phi_{t*}X_1,\ldots,\Phi_{t*}X_p)=\varphi(X_1,\ldots,X_p)
\end{displaymath}
and so at the point $\pi(x)$ is welll-defined a $p$--form $\varphi^{\prime}_{\pi(x)}$ with the property $\varphi_x=\pi^*\varphi^{\prime}_{\pi(x)}$. But $x$ is arbitrary in $M$ and then is well-defined the $p$--form $\varphi^{\prime}\in F^p(M_{\xi})$ with the property $\varphi=\pi^*\varphi^{\prime}$, which proves that $F_b^p(M)\subseteq{\rm Im}\,\pi^*$.

It remains only to prove that ${\rm Im}\,\pi^*\subseteq F_b^p(M)$. Remark that for any $\varphi^{\prime}\in F^p(M_{\xi})$ we have $\Phi^*_t\pi^*\varphi^{\prime}=\pi^*\varphi^{\prime}$, $\imath_{\xi}\pi^*\varphi^{\prime}=0$, hence $\mathcal{L}_{\xi}\pi^*\varphi^{\prime}=0$ and then $\pi^*\varphi^{\prime}\in F_b^p(M)$.
\end{proof}
Also, it is well know that $(M_{\xi},\Omega)$ is symplectic with  $d\eta=\pi^*\Omega$, and when $M$ is Sasakian then $(M_{\xi},\Omega)$ is K\"{a}hlerian. We have now
\begin{lemma}
\label{surj}
The operator $L:F_b^{p}(M)\rightarrow F_b^{p+2}(M)$ defined by $L\varphi=\varphi\wedge d\eta$ is injective for $p\leq n-1$ and surjective for $p\geq n-1$.
\end{lemma}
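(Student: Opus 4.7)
The strategy is to reduce the claim to the standard hard Lefschetz fact of symplectic linear algebra. By Theorem 2.1, the pullback $\pi^*$ identifies $F_b^{\bullet}(M)$ with $F^{\bullet}(M_\xi)$, and since $d\eta=\pi^*\Omega$ and $\pi^*$ is a morphism of graded algebras, $L\circ\pi^*=\pi^*\circ L_\Omega$, where $L_\Omega:F^p(M_\xi)\to F^{p+2}(M_\xi)$, $\alpha\mapsto\alpha\wedge\Omega$, is the classical Lefschetz operator on the $2n$-dimensional symplectic base. Hence the lemma is equivalent to the analogous injectivity/surjectivity statement for $L_\Omega$.

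The claim for $L_\Omega$ is pointwise: at each $q\in M_\xi$, the cotangent space $T_q^*M_\xi$ is a $2n$-dimensional symplectic vector space with form $\Omega_q$, and one needs $L_{\Omega_q}:\Lambda^p T_q^*M_\xi\to\Lambda^{p+2}T_q^*M_\xi$ to be injective for $p\leq n-1$ and surjective for $p\geq n-1$. Both statements are immediate consequences of the hard Lefschetz theorem in symplectic linear algebra, which says that $L_{\Omega_q}^{k}:\Lambda^{n-k}\to\Lambda^{n+k}$ is an isomorphism for every $k\geq 0$. For injectivity when $p\leq n-1$: if $L_{\Omega_q}\alpha=0$, then $L_{\Omega_q}^{n-p}\alpha=L_{\Omega_q}^{n-p-1}(L_{\Omega_q}\alpha)=0$, whence $\alpha=0$ since $L_{\Omega_q}^{n-p}$ is an isomorphism. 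For surjectivity when $p\geq n-1$: any $\beta\in\Lambda^{p+2}$ has the form $L_{\Omega_q}^{p+2-n}\gamma$ for a unique $\gamma\in\Lambda^{2n-p-2}$, and then $\alpha:=L_{\Omega_q}^{p+1-n}\gamma\in\Lambda^p$ satisfies $L_{\Omega_q}\alpha=\beta$.

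Passing from pointwise vector-bundle statements to statements about smooth sections is routine: pointwise injectivity of a smooth bundle morphism gives injectivity on sections immediately, and pointwise surjectivity, combined with any Riemannian metric on the base that splits the resulting short exact sequence of vector bundles, provides a smooth right inverse to $L_\Omega$ and hence surjectivity on smooth sections. The only substantive ingredient is the pointwise hard Lefschetz fact of symplectic linear algebra; everything else is formal, so there is no real analytic or geometric obstacle once the reduction via Theorem 2.1 is in place.
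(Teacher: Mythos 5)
Your proof is correct and follows essentially the same route as the paper: both reduce the statement via the isomorphism $\pi^*$ of Theorem 2.1 to the corresponding injectivity/surjectivity of the Lefschetz operator $\alpha\mapsto\alpha\wedge\Omega$ on the symplectic base $M_\xi$. The only difference is that the paper simply cites Bouch\'{e} for that symplectic fact, whereas you supply its proof from the pointwise hard Lefschetz isomorphisms $L^k:\Lambda^{n-k}\to\Lambda^{n+k}$, which is a correct (and self-contained) way to fill in the cited step.
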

\begin{proof}
According to \cite{Bo}, we have that the symplectic operator $L^{\prime}:F^{p}(M_{\xi})\rightarrow F^{p+2}(M_{\xi})$ given by $L^{\prime}\varphi^{\prime}=\varphi^{\prime}\wedge\Omega$ is surjective for $p\geq n-1$ and injective for $p\leq n-1$. Now, by Theorem \ref{sym}, for every $\varphi\in F_b^p(M)$ there is $\varphi^{\prime}\in F^{p}(M_{\xi})$  such that $\varphi=\pi^*\varphi^{\prime}$ and 
\begin{displaymath}
L\varphi=\varphi\wedge d\eta=\pi^*\varphi^{\prime}\wedge\pi^*\Omega=\pi^*(\varphi^{\prime}\wedge\Omega)=(\pi^*\circ L^{\prime})\varphi^{\prime}.
\end{displaymath}
Thus, the operator $L$ is also injective for $p\leq n-1$ and surjective for $p\geq n-1$.
\end{proof}

Now, as in the case of classical coeffective cohomology, see \cite{Bo, C-L-M1, F-I-L1}, we consider the subspace $\mathcal{A}_b^{p}(M)\subset F_b^{p}(M)$ defined by
\begin{displaymath}
\mathcal{A}_b^{p}(M)=\{\varphi\in F_b^{p}(M)\,|\,\varphi\wedge d\eta=0\}=\ker L|_{F_b^{p}(M)}.
\end{displaymath}
A basic form  $\varphi\in\mathcal{A}_b^{p}(M)$ is said to be a \textit{coeffective basic $p$--form} on $M$.

Since $d_b$ commutes with $L$, we can consider the subcomplex of basic de Rham complex of $M$, namely $\left(\mathcal{A}_b^{\bullet}(M), d_b\right)$, called the \textit{coeffective basic  de Rham complex} of $M$. The cohomology groups of this complex are called \textit{coeffective basic  de Rham cohomology groups} of $M$ and they are denoted by $H^{p}(\mathcal{A}_b(M))$. 

As a consequence of Lemma \ref{surj}, one gets
\begin{proposition}
Let $M$ be a $K$--contact manifold of dimension $2n+1$. Then $\mathcal{A}_b^{p}(M)=\{0\}$ for $p\leq n-1$, therefore
\begin{equation}
\label{R2}
H^{p}(\mathcal{A}_b(M))=\{0\}\,,\,\,{\rm for}\,\,p\leq n-1.
\end{equation}
\end{proposition}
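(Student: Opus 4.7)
The plan is to apply Lemma \ref{surj} directly. Recall that $\mathcal{A}_b^p(M)$ was defined as $\ker L|_{F_b^p(M)}$, where $L\varphi = \varphi \wedge d\eta$. Lemma \ref{surj} asserts that $L:F_b^p(M)\to F_b^{p+2}(M)$ is injective precisely in the range $p\leq n-1$. Injectivity of $L$ is the same statement as triviality of its kernel, so I would immediately conclude that
\begin{displaymath}
\mathcal{A}_b^p(M)=\ker L|_{F_b^p(M)}=\{0\}\quad\text{for all } p\leq n-1.
\end{displaymath}

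Once the chain groups themselves vanish in this range, the coeffective basic de Rham complex is trivial in degrees $p\leq n-1$, and therefore its cohomology groups $H^p(\mathcal{A}_b(M))$ are trivially zero for the same range. This gives the second assertion \eqref{R2} with no further work.

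There is essentially no obstacle here: the whole argument is a one-line deduction from Lemma \ref{surj} together with the definition of $\mathcal{A}_b^p(M)$. The only substantive input is Lemma \ref{surj} itself, whose proof relies on Theorem \ref{sym} (the isomorphism $\pi^*:F^p(M_\xi)\cong F_b^p(M)$) and the classical fact that the symplectic Lefschetz operator on $(M_\xi,\Omega)$ is injective below middle degree. Since these are already established, the proposition is a direct corollary and requires no new computation.
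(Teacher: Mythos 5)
Your proposal is correct and is exactly the argument the paper intends: the proposition is stated in the paper as an immediate consequence of Lemma \ref{surj}, deducing $\mathcal{A}_b^p(M)=\ker L|_{F_b^p(M)}=\{0\}$ from the injectivity of $L$ for $p\leq n-1$, and hence the vanishing of $H^p(\mathcal{A}_b(M))$ in that range. No further comment is needed.
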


Let us consider the subspace of $\widetilde{H}^{p}_{b}(M)$ given by the basic de Rham cohomology classes truncated by the basic de Rham class $[d\eta]$, namely,
\begin{equation}
\label{R3}
\widetilde{H}^{p}_{b}(M)=\{a\in H^{p}_{b}(M)\,|\,a\wedge[d\eta]=0\}.
\end{equation}
We notice that as in the case of compact cosymplectic manifolds or compact K\"{a}hler manifolds, see \cite{C-L-M1, I}, we can obtain a relation between the coeffective basic de Rham cohomology of a compact Sasakian manifold $M$ and the basic de Rham cohomology of $M$ truncated by $[d\eta]$ in the following way.

Let us  denote by $[\cdot]$ the basic de Rham cohomology classes and by $\{\cdot\}$ the coeffective basic de Rham classes.
\begin{proposition}
\label{alphadR}
For any  $K$--contact manifold $M$ of dimension $2n+1$ the natural mapping
\begin{equation}
\label{R4}
\alpha_{p}\left(\{\varphi\}\right)=[\varphi],
\end{equation}
is surjective for $p\geq n$. 
\end{proposition}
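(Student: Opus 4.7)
The plan is to prove surjectivity by a standard representative-modification argument, exploiting the surjectivity half of Lemma \ref{surj}.

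First I would verify that $\alpha_p$ is well-defined and lands in $\widetilde H_b^p(M)$. If $\varphi\in\mathcal A_b^p(M)$ with $d_b\varphi=0$, then $[\varphi]\wedge[d\eta]=[\varphi\wedge d\eta]=0$, so $[\varphi]\in\widetilde H_b^p(M)$. If $\varphi=d_b\psi$ with $\psi\in\mathcal A_b^{p-1}(M)$, then $\varphi$ is $d_b$-exact in $F_b^\bullet(M)$, hence $[\varphi]=0$. So $\alpha_p$ is a well-defined linear map into $\widetilde H_b^p(M)$.

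Next, given a class $a\in\widetilde H_b^p(M)$ with $p\geq n$, I would pick any $d_b$-closed representative $\varphi\in F_b^p(M)$ with $a=[\varphi]$. By definition of $\widetilde H_b^p(M)$, the class $[\varphi\wedge d\eta]$ vanishes in $H_b^{p+2}(M)$, so there exists $\beta\in F_b^{p+1}(M)$ with $\varphi\wedge d\eta=d_b\beta$. Since $p\geq n$ gives $p-1\geq n-1$, Lemma \ref{surj} ensures that $L:F_b^{p-1}(M)\to F_b^{p+1}(M)$ is surjective, so I can write $\beta=\gamma\wedge d\eta$ for some $\gamma\in F_b^{p-1}(M)$.

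Then, using $d(d\eta)=0$, I compute
\begin{displaymath}
\varphi\wedge d\eta=d_b(\gamma\wedge d\eta)=d_b\gamma\wedge d\eta,
\end{displaymath}
so $\varphi':=\varphi-d_b\gamma$ satisfies $\varphi'\wedge d\eta=0$, hence $\varphi'\in\mathcal A_b^p(M)$. Moreover $d_b\varphi'=0$ and $[\varphi']=[\varphi]=a$, so $\alpha_p(\{\varphi'\})=a$, which establishes surjectivity. The only nontrivial ingredient is Lemma \ref{surj}, and the constraint $p\geq n$ appears precisely because we need $L$ to hit degree $p+1$, i.e.\ to be surjective from degree $p-1$; there is no genuine obstacle beyond this index bookkeeping.
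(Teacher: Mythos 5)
Your argument is correct and is essentially identical to the paper's own proof: both pick a closed representative $\varphi$ of $a$, use $a\wedge[d\eta]=0$ to write $\varphi\wedge d\eta=d_b\sigma$, invoke the surjectivity of $L$ in degree $p-1\geq n-1$ from Lemma \ref{surj} to solve $L\gamma=\sigma$, and replace $\varphi$ by $\varphi-d_b\gamma$. The only (harmless) addition is your explicit check that $\alpha_p$ is well-defined, which the paper omits.
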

\begin{proof}
Let $a\in\widetilde{H}^{p}_{b}(M)$, that is, $a\in H^{p}_{b}(M)$ and $a\wedge[d\eta]=0$ in $H^{p+2}_{b}(M)$. Consider a representative $\varphi$ of $a$ and suppose that $\varphi\notin\mathcal{A}_b^{p}(M)$ (notice that if $\varphi\in\mathcal{A}_b^{p}(M)$, then $\varphi$ defines a basic cohomology class in $H^{p}(\mathcal{A}_b(M))$ such that $\alpha\left(\{\varphi\}\right)=a$).

Since $a\wedge[d\eta]=0$, then there exists $\sigma\in F_b^{p+1}(M)$ such that $\varphi\wedge d\eta=d_b\sigma$. Then, from Lemma \ref{surj}, there exists $\gamma\in F_b^{p-1}(M)$ such that $L\gamma=\sigma$. Thus, $L(\varphi-d_b\gamma)=0$ and $d_b(\varphi-d_b\gamma)=0$. Hence, $\varphi-d_b\gamma$ defines a basic cohomology class  in $H^{p}(\mathcal{A}_b(M))$ such that $\alpha_{p}(\{\varphi-d_b\gamma\})=a$.
\end{proof}

We also notice that for compact Sasakian manifolds, we have
\begin{equation}
\label{C5}
\Delta_{b}L=L\Delta_b.
\end{equation} 

This follows by a direct calculation using the following known identities from Sasakian geometry \cite{Fu, Og, Pi}, namely
\begin{displaymath}
\Delta\varphi=\Delta_b\varphi+L\Lambda\varphi+e_{\eta}\Lambda d\varphi-e_{\eta}d\Lambda\varphi,
\end{displaymath}
\begin{displaymath}
\Delta L\varphi-L\Delta\varphi=4(n-p-1)L\varphi+4de_{\eta}\varphi,
\end{displaymath}
and
\begin{displaymath}
(\Lambda L^k-L^k\Lambda)\varphi=4k[(n-p-k+1)L^{k-1}\varphi+e_{\eta}\imath_{\xi}L^{k-1}\varphi],
\end{displaymath}
where $L^0\varphi=\varphi$ and $L^{-1}\varphi=0$. Here $\Delta=dd^*+d^*d $ is the usual Laplacian on $M$, $e_{\eta}:F^p(M)\rightarrow F^p(M)$ is defined by $e_{\eta}\varphi=\eta\wedge\varphi$ and $\Lambda=\star L\star=-\star_b L\star_b$ is the adjoint of $L$ with respect to $\langle,\rangle$ and $\langle,\rangle_b$, respectively. 

The relation \eqref{C5} say that the map $L:\mathcal{H}_b^p(M)\rightarrow\mathcal{H}_b^{p+2}(M)$ is well defined on the space of harmonic basic $p$--forms on $M$. Moreover, by Bouch\'{e} result for compact K\"{a}hler manifolds, see \cite{Bo}, and taking into account the Theorem \ref{sym}, we have that
\begin{lemma}
\label{surjharmonic}
Let $M$ be a compact Sasakian manifold of dimension $2n+1$. The operator $L:\mathcal{H}_b^p(M)\rightarrow\mathcal{H}_b^{p+2}(M)$ is surjective for $p\geq n-1$.
\end{lemma}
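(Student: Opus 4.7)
The plan is to transfer Bouché's Kähler surjectivity result from the orbit space $M_\xi$ back to $M$ via the isomorphism of Theorem \ref{sym}, which is exactly the strategy advertised in the paragraph preceding the lemma. Since $M$ is compact Sasakian, $(M_\xi,\Omega)$ is a compact Kähler manifold with $d\eta=\pi^*\Omega$, so Bouché's result \cite{Bo} for compact Kähler manifolds gives that the Lefschetz operator $L'\varphi'=\varphi'\wedge\Omega$ restricts to a surjection $\mathcal{H}^p(M_\xi)\to\mathcal{H}^{p+2}(M_\xi)$ for every $p\geq n-1$, where $\mathcal{H}^\bullet(M_\xi)$ denotes the usual space of harmonic forms on $M_\xi$.

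Next, by Theorem \ref{sym} the pullback $\pi^*:F^\bullet(M_\xi)\to F_b^\bullet(M)$ is an isomorphism of graded vector spaces, and the computation in the proof of Lemma \ref{surj} already records $\pi^*\circ L'=L\circ\pi^*$. One further checks that $\pi^*$ identifies the Hodge star $\star_{M_\xi}$ associated with the transverse Kähler metric with the basic star operator $\star_b$ on $M$, hence it intertwines the Laplacians $\Delta_{M_\xi}$ and $\Delta_b$, and therefore restricts to an isomorphism $\pi^*:\mathcal{H}^p(M_\xi)\to\mathcal{H}_b^p(M)$. Given $\psi\in\mathcal{H}_b^{p+2}(M)$ with $p\geq n-1$, write $\psi=\pi^*\psi'$ for the unique $\psi'\in\mathcal{H}^{p+2}(M_\xi)$, invoke the previous step to produce $\varphi'\in\mathcal{H}^p(M_\xi)$ with $L'\varphi'=\psi'$, and set $\varphi=\pi^*\varphi'$. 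Then
\[
L\varphi=L\pi^*\varphi'=\pi^*(L'\varphi')=\pi^*\psi'=\psi,
\]
which is the desired surjectivity.

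The main obstacle is the middle step, namely justifying that $\pi^*$ sends harmonic forms on $M_\xi$ to basic harmonic forms on $M$, since this depends on the compatibility of $\pi^*$ with the transverse Hodge theory. For regular or quasi-regular Sasakian structures, $M_\xi$ is a smooth (possibly orbifold) Kähler manifold and the identification reduces to the standard fact that the Hodge theory of the leaf space of a Riemannian foliation with compact leaves matches the transverse Hodge theory on $M$. For a general Sasakian $\xi$, a safer route is to argue intrinsically on $M$: the commutator identities listed right after \eqref{C5} provide an $\mathfrak{sl}(2)$-action on $F_b^\bullet(M)$ whose generators $L$ and $\Lambda$ commute with $\Delta_b$ by \eqref{C5}, so the $\mathfrak{sl}(2)$-action descends to the finite-dimensional module $\bigoplus_p\mathcal{H}_b^p(M)$, and the surjectivity of $L$ in degrees $p\geq n-1$ becomes a routine consequence of the structure theory of finite-dimensional $\mathfrak{sl}(2)$-representations (equivalently, a basic hard Lefschetz statement).
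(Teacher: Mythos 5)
Your main argument is exactly the paper's: the authors justify the lemma in one line by citing Bouch\'{e}'s surjectivity of $L'$ on harmonic forms of the compact K\"{a}hler orbit space together with the isomorphism $\pi^*$ of Theorem \ref{sym}, with \eqref{C5} guaranteeing that $L$ preserves basic harmonic forms. You flesh this out correctly, and you rightly flag the one real gap in that route, namely that identifying $\mathcal{H}^\bullet(M_\xi)$ with $\mathcal{H}_b^\bullet(M)$ via $\pi^*$ presupposes a (quasi-)regular structure so that $M_\xi$ is an honest compact K\"{a}hler manifold or orbifold --- a hypothesis the paper uses silently here and makes explicit only in some later statements. Your fallback differs from the paper's: you invoke the transverse $\mathfrak{sl}(2)$-action of $L$ and $\Lambda$ on $\bigoplus_p\mathcal{H}_b^p(M)$ (legitimate, since the commutator identity listed after \eqref{C5} reduces to the standard one on basic forms because $\imath_\xi$ kills them, and $\Lambda$ commutes with $\Delta_b$ by adjunction), i.e.\ transverse hard Lefschetz, whereas the paper's stated alternative is to combine the surjectivity of $L$ on all basic forms (Lemma \ref{surj}) with the basic Hodge decomposition \eqref{hodgedec}. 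Your version has the advantage of being fully intrinsic to $M$ and independent of any regularity assumption; the paper's alternative is more elementary but still needs a basic K\"{a}hler identity to kill the harmonic projection of $L d_b^*\beta$, a point neither the paper nor your write-up spells out.
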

This, also follows directly using Lemma \ref{surj} and basic Hodge decomposition \eqref{hodgedec}. We have

\begin{theorem}
\label{main1dR}
Let $M$ be a compact Sasakian manifold of dimension $2n+1$. Then
\begin{equation}
\label{X1}
H^p(\mathcal{A}_b(M))\cong \widetilde{H}^p_b(M),\,\,\forall\,p\neq n.
\end{equation}
\end{theorem}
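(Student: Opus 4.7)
The strategy is to show that the natural map $\alpha_p:H^p(\mathcal{A}_b(M))\to\widetilde{H}^p_b(M)$ from Proposition \ref{alphadR} is bijective for all $p\neq n$, treating the ranges $p\leq n-1$ and $p\geq n+1$ separately.

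For $p\leq n-1$, the earlier proposition already gives $H^p(\mathcal{A}_b(M))=0$, so it suffices to prove $\widetilde{H}^p_b(M)=0$, i.e.\ that $L:H^p_b(M)\to H^{p+2}_b(M)$ is injective. Lemma \ref{surj} gives injectivity of $L$ at the level of basic forms; combined with \eqref{C5}, which implies $L\bigl(\mathcal{H}_b^p(M)\bigr)\subset\mathcal{H}_b^{p+2}(M)$, the restriction of $L$ to harmonic representatives is injective. The identification \eqref{harmoniciso} then yields injectivity on cohomology, hence $\widetilde{H}^p_b(M)=0$.

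For $p\geq n+1$, surjectivity of $\alpha_p$ is Proposition \ref{alphadR}; I must establish injectivity. Suppose $\{\varphi\}\in\ker\alpha_p$, so that $\varphi=d_b\psi$ for some $\psi\in F_b^{p-1}(M)$. Since $L$ commutes with $d_b$ and $L\varphi=0$, the form $L\psi\in F_b^{p+1}(M)$ is $d_b$-closed, and by the basic Hodge decomposition \eqref{hodgedec} it can be written $L\psi=h+d_b\alpha$ with $h\in\mathcal{H}_b^{p+1}(M)$ and $\alpha\in F_b^p(M)$. Lemma \ref{surjharmonic}, applicable because $p-1\geq n-1$, furnishes $\psi_2\in\mathcal{H}_b^{p-1}(M)$ with $L\psi_2=h$; Lemma \ref{surj}, applicable because $p-2\geq n-1$, furnishes $\gamma\in F_b^{p-2}(M)$ with $L\gamma=\alpha$. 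Setting $\eta_0:=\psi-\psi_2-d_b\gamma$, I compute $L\eta_0=L\psi-h-d_bL\gamma=0$ and $d_b\eta_0=d_b\psi=\varphi$ (using that $\psi_2$ is harmonic, hence $d_b$-closed). Thus $\varphi=d_b\eta_0$ with $\eta_0\in\mathcal{A}_b^{p-1}(M)$, showing $\{\varphi\}=0$.

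The main obstacle is the decomposition in the high-degree case: one must correct $\psi$ by a $d_b$-exact form so that it becomes coeffective without disturbing the identity $d_b\psi=\varphi$. Both a harmonic-level surjection (Lemma \ref{surjharmonic}) and a form-level surjection (Lemma \ref{surj}) for $L$ are needed simultaneously, and it is the stricter range of Lemma \ref{surj} that forces $p\geq n+1$, which accounts precisely for the exclusion of the critical degree $p=n$ in the statement.
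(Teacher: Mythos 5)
Your proposal is correct and follows essentially the same route as the paper: for $p\leq n-1$ it combines the vanishing of $\mathcal{A}_b^p(M)$ with the fact that $L$ preserves harmonic forms (via \eqref{C5}) and is injective on forms in that range, and for $p\geq n+1$ it proves injectivity of $\alpha_p$ by exactly the paper's correction argument, writing $L\psi=h+d_b\alpha$ and lifting $h$ through Lemma \ref{surjharmonic} and $\alpha$ through Lemma \ref{surj}. Your degree bookkeeping ($p-1\geq n-1$ for the harmonic lift, $p-2\geq n-1$ for the form-level lift) matches the paper's and correctly identifies why $p=n$ is excluded.
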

\begin{proof}
The proof it follows by two cases.

Case 1: $p\leq n-1$.

From \eqref{R2} we know that $H^{p}(\mathcal{A}_b(M))=\{0\}$ for $p\leq n-1$. Moreover, from the isomorphism \eqref{harmoniciso}, we have
\begin{equation}
\label{R8}
\widetilde{H}_{b}^{p}(M)\cong\left\{\varphi\in\mathcal{H}_{b}^{p}(M)\,|\,\varphi\wedge d\eta\in d_b\left(F_b^{p+1}(M)\right)\right\}\cong\left\{\varphi\in\mathcal{H}^{p}_{b}(M)\,|\,\varphi\wedge d\eta=0\right\}.
\end{equation}
Thus, from Lemma \ref{surj} we conclude that $\widetilde{H}^{p}_{b}(M)=\{0\}$ for $p\leq n-1$. This finishes the proof for $p\leq n-1$.

Case 2: $p\geq n+1$.

We shall see that the mapping $\alpha_{p}$ given by \eqref{R4} is an isomorphism for $p\geq n+1$. From Proposition \ref{alphadR}, it is suficient to show the injection.

Let $a\in H^{p}(\mathcal{A}_b(M))$ such that $\alpha_{p}(a)=0$ in $\widetilde{H}_{b}^{p}(M)$ and suppose that $\varphi$ is a representative  of $a$. Since $\alpha_{p}(a)=\alpha_{p}\left(\{\varphi\}\right)=[\varphi]=0$ in $\widetilde{H}_{b}^{p}(M)$, there exists $\psi\in F_b^{p-1}(M)$ such that
\begin{displaymath}
\varphi=d_b\psi.
\end{displaymath}
Suppose $\psi\notin\mathcal{A}_b^{p-1}(M)$ (notice that if $\psi\in\mathcal{A}_b^{p-1}(M)$, then $a=0$ and we conclude the proof). Since $L$ commute with $d_b$, then $d_b(L\psi)=L(d_b\psi)=L\varphi=0$; therefore $L\psi$ defines a basic cohomology class $[L\psi]\in H^{p+1}_{b}(M)$. From the isomorphism \eqref{harmoniciso}, we have
\begin{displaymath}
L\psi=h+d_b\gamma,
\end{displaymath}
for $h\in \mathcal{H}^{p+1}_{b}(M)$, $\gamma\in F_b^{p}(M)$. Since $p\geq n+1$ and $h\in\mathcal{H}_{b}^{p+1}(M)$ by Lemma \ref{surjharmonic} there exists $\sigma\in\mathcal{H}_b^{p-1}(M)$ such that $L\sigma=h$ and since $p-1\geq n$ by Lemma \ref{surj} there exists $\sigma_1\in F_b^{p-2}(M)$  such that $\gamma=L\sigma_1$. Thus,
\begin{displaymath}
L(\psi-\sigma-d_b\sigma_1)=0\,\,{\rm and}\,\,d_b(\psi-\sigma-d_b\sigma_1)=\varphi.
\end{displaymath} 
Then, $a=\{\varphi\}$ is the basic zero class in $H^{p}(\mathcal{A}_b(M))$ and this finishes the proof.
\end{proof}

Now following an argument similar that in \cite{F-I-L1}, we relate the  coeffective basic de Rham cohomology with the basic de Rham cohomology of $K$--contact manifolds by means of a long exact sequence in basic cohomology.

Let us consider the following short exact sequence for any degree $p$:
\begin{equation}
\label{X2}
0\longrightarrow\ker L|_{F_b^p(M)}=\mathcal{A}_b^p(M)\stackrel{i_b}{\longrightarrow}F_b^p(M)\stackrel{L}{\longrightarrow}{\rm Im}_b^{p+2}L\longrightarrow0.
\end{equation}
Since $L$ commutes with $d_b$, the sequence \eqref{X2} becomes a short exact sequence of basic differential complexes:
\begin{equation}
\label{X3}
0\longrightarrow\left(\ker L|_{F_b^p(M)},d_b\right)=\left(\mathcal{A}_b^p(M),d_b\right)\stackrel{i_b}{\longrightarrow}\left(F_b^p(M),d_b\right)\stackrel{L}{\longrightarrow}\left({\rm Im}_b^{p+2}L,d_b\right)\longrightarrow0.
\end{equation}
Therefore, we have the associated long exact sequence in cohomology \cite{Va}:
\begin{equation}
\label{X4}
\ldots\longrightarrow H^p(\mathcal{A}_b(M))\stackrel{H(i_b)}{\longrightarrow}H^p_b(M)\stackrel{H(L)}{\longrightarrow}H^{p+2}({\rm Im }_bL)\stackrel{\delta^b_{p+2}}{\longrightarrow}H^{p+1}(\mathcal{A}_b(M))\longrightarrow\ldots,
\end{equation}
where $H(i_b)$ and $H(L)$ are the induced homomorphisms in basic cohomology by $i_b$ and $L$, respectively, and $\delta^b_{p+2}$ is the connecting homomorphism defined in the following way: for $[\varphi]\in H^{p+2}({\rm Im}_bL)$, then $\delta^b_{p+2}[\varphi]=[d_b\psi]$, for $\psi\in F_b^p(M)$ such that $L\psi=\varphi$.

From Lemma \ref{surj} it results that ${\rm Im}_b^{p+2}L=F_b^{p+2}(M)$, for $p\geq n-1$. As a consequence, we have
\begin{displaymath}
H^{p+2}({\rm Im}_bL)=H^{p+2}_b(M), \,\,\forall p\geq n.
\end{displaymath}
Furthermore, the long exact sequence in basic cohomology \eqref{X4} may be expressed as
\begin{equation}
\label{X5}
\ldots\longrightarrow H^p(\mathcal{A}_b(M))\stackrel{H(i_b)}{\longrightarrow}H^p_b(M)\stackrel{H(L)}{\longrightarrow}H^{p+2}_b(M)\stackrel{\delta^b_{p+2}}{\longrightarrow}H^{p+1}(\mathcal{A}_b(M))\longrightarrow\ldots
\end{equation}
for $p\geq n$. Now, we shall decompose the long exact sequence \eqref{X5} in $5$ terms exact sequences:
\begin{equation}
\label{X6}
0\rightarrow{\rm Im}\,\delta^b_{p+1}=\ker H(i_b)\stackrel{i}{\rightarrow}H^p(\mathcal{A}_b(M))\stackrel{H(i_b)}{\longrightarrow}H^p_b(M)\stackrel{H(L)}{\longrightarrow}H^{p+2}_b(M)\stackrel{\delta^b_{p+2}}{\longrightarrow}{\rm Im}\,\delta^b_{p+2}\rightarrow0.
\end{equation}
If $H_b^p(M)$ are finite dimensional (for instance if $M$ is compact) we denote by $b_b^p(M)=\dim H_b^p(M)$ the basic $p$-th Betti number of $M$, see \cite{B-G}. Since $0\leq\dim({\rm Im}\,\delta^b_p)\leq b_b^p(M)$, for $p\geq n+2$ we have the following result:
\begin{proposition}
\label{finitedim}
Let $M$ be a $K$--contact manifold of dimension $2n+1$ such that $H_b^p(M)$ are finite dimensional. Then the coeffective basic  de Rham cohomology group $H^p(\mathcal{A}_b(M))$ has finite dimension, for $p\geq n+1$.
\end{proposition}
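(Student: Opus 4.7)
The plan is to read off finite-dimensionality of $H^p(\mathcal{A}_b(M))$ directly from the $5$--term exact sequence \eqref{X6}, which the paper has already set up at every degree $p \geq n$. The only work is a small rank/nullity count plus a careful check that the bounding Betti numbers are available at the shifted indices we need.

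First I would extract from \eqref{X6} the short exact sequence
\begin{equation*}
0 \longrightarrow \operatorname{Im}\delta^b_{p+1} \longrightarrow H^p(\mathcal{A}_b(M)) \xrightarrow{H(i_b)} \operatorname{Im} H(i_b) \longrightarrow 0,
\end{equation*}
which immediately gives
\begin{equation*}
\dim H^p(\mathcal{A}_b(M)) \;=\; \dim \operatorname{Im}\delta^b_{p+1} \;+\; \dim \operatorname{Im} H(i_b).
\end{equation*}
For the second summand, $\operatorname{Im} H(i_b)$ is a subspace of $H^p_b(M)$, hence bounded by $b^p_b(M)$, which is finite by hypothesis. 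For the first summand, $\delta^b_{p+1}$ has domain $H^{p+1}_b(M)$ (this is where one must be slightly careful: the identification of the domain of $\delta^b_q$ with $H^q_b(M)$ in \eqref{X5} instead of the less friendly $H^q(\operatorname{Im}_b L)$ from \eqref{X4} needs $q \geq n+2$, i.e.\ $p \geq n+1$, which is exactly the range in the statement). Thus $\dim \operatorname{Im}\delta^b_{p+1} \leq b^{p+1}_b(M) < \infty$.

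Combining, for $p \geq n+1$ one gets the explicit bound
\begin{equation*}
\dim H^p(\mathcal{A}_b(M)) \;\leq\; b^p_b(M) + b^{p+1}_b(M) \;<\; \infty,
\end{equation*}
proving the proposition.

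There is no real obstacle here: essentially all the heavy lifting has been done before the statement, since Lemma~\ref{surj} was already used to replace $H^{p+2}(\operatorname{Im}_b L)$ by $H^{p+2}_b(M)$ and turn \eqref{X4} into the cleaner \eqref{X5} in the range $p \geq n$. The only point requiring a moment's attention is the index-matching above: both pieces controlling $H^p(\mathcal{A}_b(M))$ must live in the range where \eqref{X5} (and hence \eqref{X6}) is available, and it is precisely the $\delta^b_{p+1}$ arrow that forces the lower bound $p \geq n+1$ rather than $p \geq n$.
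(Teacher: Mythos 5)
Your proof is correct and follows essentially the same route as the paper: the paper likewise reads off finite-dimensionality from the $5$--term exact sequence \eqref{X6}, using $\dim(\mathrm{Im}\,\delta^b_{p+1})\leq b_b^{p+1}(M)$ together with $\mathrm{Im}\,H(i_b)\subseteq H^p_b(M)$, and your index check explaining why the identification $H^{p+1}(\mathrm{Im}_bL)\cong H^{p+1}_b(M)$ forces $p\geq n+1$ is exactly the point the paper records as ``$0\leq\dim(\mathrm{Im}\,\delta^b_p)\leq b_b^p(M)$ for $p\geq n+2$''. Nothing is missing.
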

Thus, we can define the {\em coeffective basic numbers} of $M$ by $c_b^p(M)=\dim H^p(\mathcal{A}_b(M))$, $p\geq n+1$. Notice that $c_b^p(M)=0$ for $p\leq n-1$.

From \eqref{X6}, we have
\begin{displaymath}
\dim({\rm Im}\,\delta^b_{p+1})-\dim H^p(\mathcal{A}_b(M))+\dim H_b^p(M)-\dim H_b^{p+2}(M)+\dim({\rm \delta^b_{p+2}})=0,
\end{displaymath}
for $p\geq n+1$, from which we deduce
\begin{equation}
\label{X7}
\dim({\rm Im}\delta^b_{p+1})-c_b^p(M)+b_b^p(M)-b_b^{p+2}(M)+\dim({\rm Im}\delta^b_{p+2})=0.
\end{equation}
Now, as a consequence of \eqref{X7}, we obtain that the  coeffective basic  numbers of $M$ are bounded by upper and lower limits depending on the basic Betti numbers of the $K$--contact manifold $M$.
\begin{theorem}
Let $M$ be a $K$--contact manifold of dimension $2n+1$ such that $H_b^p(M)$ are finite dimensional. Then
\begin{equation}
\label{X8}
b_b^p(M)-b_{b}^{p+2}(M)\leq c_b^p(M)\leq b_b^p(M)+b_b^{p+1}(M)
\end{equation}
for every $p\geq n+1$.
\end{theorem}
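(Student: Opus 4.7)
The plan is to read off both inequalities directly from the alternating dimension relation \eqref{X7}, which was obtained just above the theorem as a consequence of the exactness of the five-term sequence \eqref{X6}. Rewriting \eqref{X7} to isolate $c_b^p(M)$, I get
\begin{equation*}
c_b^p(M)=b_b^p(M)-b_b^{p+2}(M)+\dim({\rm Im}\,\delta^b_{p+1})+\dim({\rm Im}\,\delta^b_{p+2}),
\end{equation*}
valid for every $p\geq n+1$. All quantities on the right are finite by Proposition \ref{finitedim} together with the standing assumption that the basic Betti numbers are finite, so the identity makes sense.

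For the lower bound, the two image dimensions are non-negative, hence
\begin{equation*}
c_b^p(M)\geq b_b^p(M)-b_b^{p+2}(M),
\end{equation*}
which is the left-hand inequality of \eqref{X8}.

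For the upper bound, the key observation is to bound each $\dim({\rm Im}\,\delta^b)$ by the dimension of its \emph{source}, reading off the indexing carefully from the long exact sequence \eqref{X5}. There one has $\delta^b_{p+2}:H^{p+2}_b(M)\to H^{p+1}(\mathcal{A}_b(M))$ and $\delta^b_{p+1}:H^{p+1}_b(M)\to H^{p}(\mathcal{A}_b(M))$, so
\begin{equation*}
\dim({\rm Im}\,\delta^b_{p+1})\leq b_b^{p+1}(M),\qquad \dim({\rm Im}\,\delta^b_{p+2})\leq b_b^{p+2}(M).
\end{equation*}
Substituting these into the expression for $c_b^p(M)$, the two $b_b^{p+2}(M)$ terms cancel and one gets $c_b^p(M)\leq b_b^p(M)+b_b^{p+1}(M)$, the right-hand inequality.

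There is essentially no obstacle here since the substantive work has already been carried out in deriving \eqref{X7}; the argument is a one-line bookkeeping exercise. The only point requiring a modicum of attention is to verify the index shift on the connecting homomorphisms and to confirm that \eqref{X7} is indeed valid in the full range $p\geq n+1$ claimed by the theorem, which in turn relies on Lemma \ref{surj} giving ${\rm Im}_b^{p+2}L=F_b^{p+2}(M)$ for $p\geq n-1$, as already used just above \eqref{X5}.
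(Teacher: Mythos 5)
Your proof is correct and follows exactly the route the paper intends: the theorem is stated there as an immediate consequence of the alternating-dimension identity \eqref{X7} coming from the five-term exact sequence \eqref{X6}, with the lower bound from non-negativity of $\dim({\rm Im}\,\delta^b_{p+1})$ and $\dim({\rm Im}\,\delta^b_{p+2})$ and the upper bound from $\dim({\rm Im}\,\delta^b_{q})\leq b_b^{q}(M)$. Your index bookkeeping on the connecting homomorphisms matches the paper's convention, so there is nothing to add.
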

Since $b_b^{2n}(M)=1$ and $b_b^p(M)=0$ for every $p\geq 2n+1$ we obtain
\begin{corollary}
Let $M$ be a $K$--contact manifold of dimension $2n+1$. Then $c_b^{2n}(M)=1$.
\end{corollary}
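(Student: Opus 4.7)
The plan is to apply the inequality \eqref{X8} from the preceding theorem at the endpoint $p=2n$. Since $2n\geq n+1$ whenever $n\geq 1$, the degree is admissible and one obtains
\begin{displaymath}
b_b^{2n}(M)-b_b^{2n+2}(M)\;\leq\;c_b^{2n}(M)\;\leq\;b_b^{2n}(M)+b_b^{2n+1}(M).
\end{displaymath}

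Next I would justify that the outer correction terms vanish. By Theorem \ref{sym} the space $F_b^p(M)$ is isomorphic to $F^p(M_\xi)$, and the orbit space $M_\xi$ has dimension $2n$, so $F_b^p(M)=0$ for every $p\geq 2n+1$. Consequently $b_b^{2n+1}(M)=b_b^{2n+2}(M)=0$, and the sandwich inequality collapses to
\begin{displaymath}
b_b^{2n}(M)\;\leq\;c_b^{2n}(M)\;\leq\;b_b^{2n}(M).
\end{displaymath}

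Finally I would invoke the top basic Betti number computation $b_b^{2n}(M)=1$ recalled just before the corollary. This is the transverse analogue of Poincar\'e duality in top degree: combining the basic Hodge decomposition \eqref{hodgedec}--\eqref{harmoniciso} with the observation that $(d\eta)^n$ provides a nowhere-vanishing transverse volume form on $M$ (equivalently, $\Omega^n$ is a volume form on the symplectic orbit space $M_\xi$) shows that $\mathcal{H}_b^{2n}(M)$ is one-dimensional, spanned by $(d\eta)^n$. Substituting $b_b^{2n}(M)=1$ into the collapsed inequality yields $c_b^{2n}(M)=1$.

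There is essentially no obstacle: the corollary is a direct numerical specialization of \eqref{X8}. The only point worth flagging is that the inequality requires $H_b^p(M)$ to be finite dimensional in the relevant range, a condition inherited implicitly from the standing compact (or finite-type) hypothesis on $M$ under which the coeffective basic numbers $c_b^p(M)$ are defined at all.
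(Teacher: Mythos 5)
Your proposal is correct and follows exactly the paper's route: the corollary is obtained by specializing the inequality \eqref{X8} to $p=2n$ and using $b_b^{2n}(M)=1$ together with $b_b^{p}(M)=0$ for $p\geq 2n+1$, which collapses the sandwich to an equality. The extra justification you supply for $b_b^{2n}(M)=1$ (via the transverse volume form $(d\eta)^n$ and the basic Hodge theory) is a welcome elaboration of a fact the paper simply asserts, but it does not change the argument.
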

We also have
\begin{theorem}
Let $M$ be a compact Sasakian manifold of dimension $2n+1$. Then
\begin{equation}
\label{X9}
c_b^p(M)=b_b^p(M)-b_{b}^{p+2}(M),\,\,\forall p\geq n+1.
\end{equation}
\end{theorem}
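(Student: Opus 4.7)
The plan is to show that both connecting homomorphisms appearing in the five-term exact sequence \eqref{X6} vanish in the range $p \geq n+1$, which combined with the dimension identity \eqref{X7} yields the desired equality.

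First, I would revisit \eqref{X7}, which reads
\begin{displaymath}
\dim({\rm Im}\,\delta^b_{p+1}) - c_b^p(M) + b_b^p(M) - b_b^{p+2}(M) + \dim({\rm Im}\,\delta^b_{p+2}) = 0,
\end{displaymath}
so equivalently
\begin{displaymath}
c_b^p(M) = b_b^p(M) - b_b^{p+2}(M) + \dim({\rm Im}\,\delta^b_{p+1}) + \dim({\rm Im}\,\delta^b_{p+2}).
\end{displaymath}
Thus the theorem reduces to verifying $\delta^b_{p+1} = 0$ and $\delta^b_{p+2} = 0$ whenever $p \geq n+1$. By the exactness of \eqref{X5}, each of these vanishings is equivalent to surjectivity of the corresponding Lefschetz-type map $H(L)$, specifically $H(L)\colon H_b^{p-1}(M) \to H_b^{p+1}(M)$ (for $\delta^b_{p+1} = 0$) and $H(L)\colon H_b^{p}(M) \to H_b^{p+2}(M)$ (for $\delta^b_{p+2} = 0$).

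Next, I would invoke the compactness and Sasakian hypothesis. By relation \eqref{C5} we have $\Delta_b L = L \Delta_b$, so $L$ preserves basic harmonic forms. Combined with the harmonic isomorphism \eqref{harmoniciso}, the induced map $H(L)$ can be identified with the restriction $L\colon \mathcal{H}_b^q(M) \to \mathcal{H}_b^{q+2}(M)$. Lemma \ref{surjharmonic} asserts that this restriction is surjective whenever $q \geq n-1$. Applying this with $q = p-1$ and $q = p$, and using $p \geq n+1$, we get $q \geq n \geq n-1$ in both cases, so both instances of $H(L)$ are surjective.

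The main conceptual step, and the only nontrivial ingredient, is precisely the surjectivity of $L$ on basic harmonic forms provided by Lemma \ref{surjharmonic}; this is where the Sasakian (not merely $K$-contact) hypothesis is essential, because it rests on $\Delta_b L = L \Delta_b$ which uses the K\"ahlerian identities on $M_\xi$ via Theorem \ref{sym}. Once these two surjectivities are in place, substituting $\dim({\rm Im}\,\delta^b_{p+1}) = 0$ and $\dim({\rm Im}\,\delta^b_{p+2}) = 0$ into the displayed expression for $c_b^p(M)$ yields \eqref{X9} directly, completing the proof.
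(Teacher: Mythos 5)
Your proposal is correct and follows essentially the same route as the paper: both arguments come down to showing the connecting homomorphisms in \eqref{X6} vanish, with Lemma \ref{surjharmonic} (surjectivity of $L$ on basic harmonic forms, resting on $\Delta_b L = L\Delta_b$) as the sole nontrivial input. The paper phrases this by computing $\delta^b_{p+2}[\varphi]=[d_b\psi]=0$ directly from a harmonic preimage $\psi$, while you deduce the vanishing from surjectivity of $H(L)$ via exactness of \eqref{X5}; these are the same argument in slightly different clothing.
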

\begin{proof}
The proof follows in a similar manner to the proof of Theorem 5.1. from \cite{C-L-M1} or Theorem 4.1 from \cite{F-I-L} and consist in computing the connecting mapping $\delta^b_{p+2}$.

Let $a\in H_b^{p+2}(M)$. Taking into account the Hodge theory for basic forms on compact Sasakian manifolds, see \cite{B-G}, we may consider the unique harmonic representative $\varphi$ of the basic de Rham cohomology class $a$. 

Then, by Lemma \ref{surjharmonic}, there exists a harmonic basic $p$--form $\psi$ such that $L\psi=\varphi$. The theorem follows by the definition of the connecting homomorphism, $\delta^b_{p+2}\varphi=[d_b\psi]=0$.

\end{proof}

In the end of this section we give a relation between the coeffective de Rham cohomology $H^{\bullet}(\mathcal{A}(M))$ of a compact $K$--contact manifold $M$, \cite{F-I-L}, and our basic coeffective de Rham cohomology of $M$. 

Recall that if $M$ is compact, the Lie group of isometries of the metric $g$ is compact and then the closure of the subgroup $\{\exp(t\xi)\}_{t\in\mathbb{R}}$ is a compact abelian Lie group, that is it is isomorphic to a torus $\mathcal{T}$. Denoting by $F^{\bullet}_b(M)^{\mathcal{T}}$ the complex of $\mathcal{T}$--invariant forms on $M$, then according to Proposition 7.2.1 from \cite{B-G} the following sequence
\begin{equation}
\label{X10}
0\longrightarrow F_b^\bullet(M)\stackrel{\imath}{\longrightarrow}F^\bullet(M)^{\mathcal{T}}\stackrel{\imath_{\xi}}{\longrightarrow}F_b^{\bullet-1}(M)\longrightarrow0
\end{equation}
is an exact sequence of complexes which leads to the following long exact sequence in cohomology 
\begin{equation}
\label{X12}
\ldots\longrightarrow H_b^p(M)\stackrel{\imath_*}{\longrightarrow} H^p(M)\stackrel{j_p}{\longrightarrow}H_b^{p-1}(M)\stackrel{\delta_p}{\longrightarrow}H^{p+1}_b(M)\longrightarrow\ldots,
\end{equation}
where $\delta_p$ is the connecting homomorphism given by $\delta_p[\varphi]=[L\varphi]=[d\eta]\cup [\varphi]$, and $j_p$ is the composition of the map induced by $\imath_\xi$ with the isomorphism $H^p\left(F^\bullet(M)^{\mathcal{T}}\right)\cong H^p(M)$.

Taking into account that $\imath_\xi L=L\imath_\xi$ then
\begin{equation}
\label{X11}
0\longrightarrow \mathcal{A}_b^\bullet(M)\stackrel{\imath}{\longrightarrow}\mathcal{A}^\bullet(M)^{\mathcal{T}}\stackrel{\imath_{\xi}}{\longrightarrow}\mathcal{A}_b^{\bullet-1}(M)\longrightarrow0
\end{equation}
is an exact sequence of coeffective complexes, where $\mathcal{A}^\bullet(M)$ is the space of coeffective forms on $M$, that is $\varphi\in F^\bullet(M)$ such that $L\varphi=0$ and $\mathcal{A}^\bullet(M)^{\mathcal{T}}$ is the space of coeffective $\mathcal{T}$--invariant forms.

Now, if we consider the long exact sequence in cohomology \eqref{X12} for coeffective forms we obtain that the connecting homomorphism $\delta_p$ vanish for every $p$, so we get the short exact sequence
in coeffective cohomology, 
\begin{equation}
\label{X13}
0\longrightarrow H^p(\mathcal{A}(M))\stackrel{j_p}{\longrightarrow}H^{p-1}(\mathcal{A}_b(M))\longrightarrow0,
\end{equation}
for every $p\geq 1$, which say that
\begin{theorem}
\label{isomorphic}
If $M$ is a compact $K$--contact manifold of dimension $2n+1$, then
\begin{equation}
\label{X14}
H^p(\mathcal{A}(M))\cong H^{p-1}(\mathcal{A}_b(M)),\,\,\forall\,p=1,\ldots,2n+1.
\end{equation}
\end{theorem}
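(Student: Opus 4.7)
The approach is exactly the one telegraphed in the text preceding the theorem: extract the isomorphism from the short exact sequence of coeffective complexes
\begin{equation*}
0\longrightarrow\mathcal{A}_b^\bullet(M)\stackrel{\imath}{\longrightarrow}\mathcal{A}^\bullet(M)^{\mathcal{T}}\stackrel{\imath_\xi}{\longrightarrow}\mathcal{A}_b^{\bullet-1}(M)\longrightarrow 0
\end{equation*}
together with the fact that the closure $\mathcal{T}$ of the Reeb one-parameter subgroup is a compact connected torus of isometries preserving $\eta$ (hence $d\eta$), and the tautological coeffectivity relation $\varphi\wedge d\eta=0$.

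\textbf{Step 1: Averaging.} I would first identify $H^p(\mathcal{A}^\bullet(M)^{\mathcal{T}})$ with $H^p(\mathcal{A}(M))$. Since every $g\in\mathcal{T}$ satisfies $g^*\eta=\eta$, pullback preserves the subcomplex $\mathcal{A}^\bullet(M)\subset F^\bullet(M)$, so the usual averaging operator $P(\alpha)=\int_{\mathcal{T}}g^*\alpha\,dg$ and the Cartan-formula chain homotopy $dK+Kd=P-\mathrm{id}$ (built from a path $g_t$ in $\mathcal{T}$ via $\mathcal{L}_X=d\imath_X+\imath_X d$) restrict to $\mathcal{A}^\bullet(M)$; this is precisely the point where one uses $\mathcal{L}_X(\varphi\wedge d\eta)=(\mathcal{L}_X\varphi)\wedge d\eta$ for $X$ tangent to $\mathcal{T}$, so $\mathrm{coeffective}\Rightarrow\mathrm{coeffective}$ under the homotopy. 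Hence the inclusion $\mathcal{A}^\bullet(M)^{\mathcal{T}}\hookrightarrow\mathcal{A}^\bullet(M)$ is a quasi-isomorphism.

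\textbf{Step 2: Long exact sequence and vanishing of $\delta_p$.} The short exact sequence above yields a long exact sequence
\begin{equation*}
\cdots\to H^p(\mathcal{A}_b(M))\stackrel{\imath_*}{\to}H^p(\mathcal{A}^\bullet(M)^{\mathcal{T}})\stackrel{(\imath_\xi)_*}{\to}H^{p-1}(\mathcal{A}_b(M))\stackrel{\delta_p}{\to}H^{p+1}(\mathcal{A}_b(M))\to\cdots
\end{equation*}
and the central calculation is to show each $\delta_p=0$. Given a cocycle $\varphi\in\mathcal{A}_b^{p-1}(M)$, take the lift $\tilde\varphi=\eta\wedge\varphi$, which lies in $\mathcal{A}^p(M)^{\mathcal{T}}$ because $\tilde\varphi\wedge d\eta=\eta\wedge(\varphi\wedge d\eta)=0$ and $\mathcal{T}$-invariance is inherited from invariance of $\eta$ and of the basic form $\varphi$; moreover $\imath_\xi\tilde\varphi=\varphi$. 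Then
\begin{equation*}
d\tilde\varphi=d\eta\wedge\varphi-\eta\wedge d_b\varphi=0,
\end{equation*}
the first summand by coeffectivity of $\varphi$ and the second by the cocycle condition. Hence $\delta_p[\varphi]=0$ for every $p$. Feeding this into the long exact sequence together with Step~1, the classical vanishing $H^p(\mathcal{A}(M))=0$ for $p\leq n$ from \cite{F-I-L1}, and the basic counterpart $H^p(\mathcal{A}_b(M))=0$ for $p\leq n-1$ from \eqref{R2}, collapses the sequence into the two-term exact sequence \eqref{X13} and thereby delivers the isomorphism for all $p=1,\ldots,2n+1$.

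\textbf{Main obstacle.} The clean part is the explicit choice of lift $\eta\wedge\varphi$: its coeffectivity, $\mathcal{T}$-invariance, and closedness follow from a one-line calculation that uses nothing beyond $\varphi\wedge d\eta=0$ and $d_b\varphi=0$. The real technical work is Step~1, i.e.\ checking that both the averaging operator and its associated chain homotopy preserve the coeffective subcomplex, so that the standard averaging theorem for compact connected group actions can be applied \emph{inside} $\mathcal{A}^\bullet(M)$ rather than only in $F^\bullet(M)$. Once this quasi-isomorphism is secured, the rest is routine bookkeeping with the long exact sequence together with the two vanishing ranges.
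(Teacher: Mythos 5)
Your proposal follows the paper's own route: the short exact sequence of coeffective complexes, its long exact cohomology sequence, and the vanishing of the connecting homomorphism via the lift $\eta\wedge\varphi$ (the paper phrases this as $\delta_p[\varphi]=[L\varphi]=0$, which is the same computation). The genuine gap is the final ``collapse'' step --- and it is the same gap that sits in the paper's own passage from \eqref{X12} to \eqref{X13}. Once every connecting homomorphism vanishes, the long exact sequence does not degenerate to the two-term sequence \eqref{X13}; it degenerates to the three-term short exact sequences
\begin{displaymath}
0\longrightarrow H^p(\mathcal{A}_b(M))\stackrel{\imath_*}{\longrightarrow}H^p\bigl(\mathcal{A}^\bullet(M)^{\mathcal{T}}\bigr)\stackrel{(\imath_\xi)_*}{\longrightarrow}H^{p-1}(\mathcal{A}_b(M))\longrightarrow 0,
\end{displaymath}
in which $\imath_*$ is \emph{injective} precisely because $\delta_{p-1}=0$. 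The vanishing ranges you invoke kill the left-hand term only in low degree; for $p\geq n+1$ the group $H^p(\mathcal{A}_b(M))$ is in general nonzero (the paper itself proves $c_b^{2n}(M)=1$), so the sequence cannot be shortened to two terms there. Concretely, for a compact Sasakian $5$-manifold ($n=2$, $p=4$) your argument yields $\dim H^{4}\bigl(\mathcal{A}^\bullet(M)^{\mathcal{T}}\bigr)=c_b^{4}(M)+c_b^{3}(M)=1+c_b^{3}(M)$, not the claimed $c_b^{3}(M)$, and these are finite numbers by Proposition \ref{finitedim}. To rescue \eqref{X14} one would need $\imath_*=0$, which contradicts its injectivity unless $H^p(\mathcal{A}_b(M))=0$; what the argument actually proves is $H^p\bigl(\mathcal{A}^\bullet(M)^{\mathcal{T}}\bigr)\cong H^p(\mathcal{A}_b(M))\oplus H^{p-1}(\mathcal{A}_b(M))$.

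A secondary problem is Step 1. The averaging operator $P$ does preserve $\mathcal{A}^\bullet(M)$, because $\mathcal{L}_X$ does; but the homotopy operator $K$ is built from $\imath_X$, and $\imath_X$ does \emph{not} preserve coeffectivity: from $\imath_X(\varphi\wedge d\eta)=\imath_X\varphi\wedge d\eta+(-1)^{\deg\varphi}\varphi\wedge\imath_Xd\eta$ one only gets $\imath_X\varphi\wedge d\eta=-(-1)^{\deg\varphi}\varphi\wedge\imath_Xd\eta$, which need not vanish. So the identity $\mathcal{L}_X(\varphi\wedge d\eta)=(\mathcal{L}_X\varphi)\wedge d\eta$ does not deliver the claimed restriction of the chain homotopy, and the identification $H^p\bigl(\mathcal{A}^\bullet(M)^{\mathcal{T}}\bigr)\cong H^p(\mathcal{A}(M))$ (used tacitly in the paper as well) needs a different justification. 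This is worth repairing, but it is the collapse step above that actually breaks the proof.
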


\section{Coeffective basic Dolbeault cohomology}
\setcounter{equation}{0}
In this section we extend our study for basic forms with complex values on a Sasakian manifold $M$ obtaining a  coeffective basic  Dolbeault cohomology on $M$. In the case when $M$ is a compact Sasakian manifold, we prove a Hodge decomposition theorem for coeffective basic de Rham cohomology of $M$, relating this cohomology with basic coeffective Dolbeault cohomology of $M$. The notions are introduced in a similar manner as for K\"{a}hler manifolds, see \cite{I}.

For our purpose the complex valued forms on Sasakian manifolds play an important role. For this reason we have need to recall some notions about Dolbeault basic operators on Saskian manifolds. Notice that endomorphism $F$ determines a complex structure on the contact distribution $\mathcal{D}=\ker\eta$ and on a Sasakian manifold we have $N_F(X,Y)=0$ for any $X,Y\in\mathcal{D}$, where $N_F$ denotes the Nijenhuis tensor associated to $F$. Then the complexified of the space of basic $p$--forms admits the decomposition
\begin{equation}
\label{basdec}
F^p_b(M)\otimes_{\mathbb{R}}\mathbb{C}=\bigoplus_{r+s=p}F^{r,s}_b(M),
\end{equation}
where $F^{r,s}_b(M)$ is the space of \textit{basic forms of type} $(r, s)$, that is the basic forms
which can be nonzero only when act on $r$ vector fields from $\mathcal{D}^{1,0}$ and on s vector
fields from $\mathcal{D}^{0,1}$. Here we have considered the decomposition of the complexified contact distribution, namely $\mathcal{D}\otimes_{\mathbb{R}}\mathbb{C}=\mathcal{D}^{1,0}\oplus\mathcal{D}^{0,1}$. Then, by applying the classical method used in the case of almost complex manifolds (see for instance \cite{KN} pg. 125-126), a simple calculation proves that
\begin{displaymath}
d_bF^{r,s}_b(M)\subset F^{r+1,s}_b(M)\oplus F_b^{r,s+1}(M),
\end{displaymath}
and so the basic exterior derivative admits the decomposition $d_b=\partial_b+\overline{\partial}_b$, where
\begin{displaymath}
\partial_b:F^{r,s}_b(M)\rightarrow F^{r+1,s}_b(M)\,;\,\overline{\partial}_b:F^{r,s}_b(M)\rightarrow F^{r,s+1}_b(M).
\end{displaymath}
By $d_b^2=0$ we deduce 
\begin{equation}
\label{e383}
\partial_b^2=\overline{\partial}_b^2=\partial_b\overline{\partial}_b+\overline{\partial}_b\partial_b=0.
\end{equation} 

On the other hand, we have the decomposition $d^\ast_{b}\omega=\partial_b^*\omega+\overline{\partial}_b^*\omega$, induced by the decomposition $d_{b}=\partial_b+\overline{\partial}_b$ of the basic differential and some formulas similar to (\ref{e383}), namely
\begin{equation}\label{e384}
\partial_b^{*2}=\overline{\partial}_b^{*2}=\partial_b^*\overline{\partial}_b^*+\overline{\partial}_b^*\partial_b^*=0.	
\end{equation}
\par
Notice that the classical Hodge identities from K\"{a}hler geometry also hold on a compact Sasakian manifold, as shown in \cite{Ta75}. See also Lemma 7.2.7 from \cite{B-G} or Lemme 3.4.4 from \cite{E-K1} in a more general case of transversally K\"{a}hlerian foliations. 
If we define 
\begin{equation}
\label{baslap}
\Delta_b=d_bd_b^\ast+d_b^\ast d_b\,,\,\Delta_{\partial_b}=\partial_b\partial_b^*+\partial_b^*\partial_b\,\,,\,\,\Delta_{\overline{\partial}_b}=\overline{\partial}_b\overline{\partial}_b^*+\overline{\partial}_b^*\overline{\partial}_b,
\end{equation}
then we have
\begin{lemma}
\label{eqlap}
(\cite{B-G}) On a compact Sasakian manifold one has
\begin{displaymath}
\Delta_b=\Delta_{\partial_b}+\Delta_{\overline{\partial}_b}=2\Delta_{\partial_b}=2\Delta_{\overline{\partial}_b}.
\end{displaymath}
\end{lemma}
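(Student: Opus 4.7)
The plan is to mimic the classical K\"{a}hler Hodge identity argument but in the transverse (basic) setting of the Reeb foliation, which is possible precisely because the foliation is transversally K\"{a}hler on a compact Sasakian manifold. The starting point is the identity recalled just before the lemma: on such a manifold the classical K\"{a}hler identities admit basic analogues, namely
\begin{equation*}
[\Lambda, \partial_b] = -i\,\overline{\partial}_b^{\,*}, \qquad [\Lambda, \overline{\partial}_b] = i\,\partial_b^{*},
\end{equation*}
where $\Lambda = -\star_b L \star_b$ is the adjoint of $L$ on basic forms. These are exactly the transverse analogues of the Hodge identities for a K\"{a}hler structure and follow, via the isomorphism of Theorem~\ref{sym}, from the K\"{a}hler structure on $M_\xi$; they are precisely the statements cited from \cite{Ta75, B-G, E-K1}.

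Given these identities, I would first expand the basic Laplacian by plugging in $d_b = \partial_b + \overline{\partial}_b$ and, taking adjoints, $d_b^{*} = \partial_b^{*} + \overline{\partial}_b^{\,*}$. A direct computation gives
\begin{equation*}
\Delta_b = \Delta_{\partial_b} + \Delta_{\overline{\partial}_b} + \bigl(\partial_b\overline{\partial}_b^{\,*} + \overline{\partial}_b^{\,*}\partial_b\bigr) + \bigl(\overline{\partial}_b\partial_b^{*} + \partial_b^{*}\overline{\partial}_b\bigr).
\end{equation*}
The next step is to show that the two cross terms vanish: substitute $\overline{\partial}_b^{\,*} = i[\Lambda,\partial_b]$ into $\partial_b\overline{\partial}_b^{\,*} + \overline{\partial}_b^{\,*}\partial_b$, and use $\partial_b^2 = 0$ together with the Jacobi-like expansion of the (graded) commutator $[\Lambda,\partial_b]$; the terms collapse to zero. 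Taking the formal adjoint, or running the analogous computation with $\partial_b^{*} = -i[\Lambda,\overline{\partial}_b]$, shows that $\overline{\partial}_b\partial_b^{*} + \partial_b^{*}\overline{\partial}_b = 0$ as well. This yields the first equality $\Delta_b = \Delta_{\partial_b} + \Delta_{\overline{\partial}_b}$.

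For the remaining equality $\Delta_{\partial_b} = \Delta_{\overline{\partial}_b}$, I would again use the basic K\"{a}hler identities. Writing $\partial_b^{*} = -i[\Lambda,\overline{\partial}_b]$ and expanding $\Delta_{\partial_b} = \partial_b\partial_b^{*} + \partial_b^{*}\partial_b$, one rearranges using $\overline{\partial}_b^{\,2}=0$ and $\partial_b\overline{\partial}_b = -\overline{\partial}_b\partial_b$ to obtain an expression involving $\overline{\partial}_b$ and $\overline{\partial}_b^{\,*}$ whose structure matches $\Delta_{\overline{\partial}_b}$. Combining these two halves gives $\Delta_b = 2\Delta_{\partial_b} = 2\Delta_{\overline{\partial}_b}$. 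The only real obstacle is verifying the basic K\"{a}hler identities themselves, but these are available either by transferring them from $M_\xi$ through Theorem~\ref{sym} or by direct computation as in \cite{Ta75, E-K1}; once they are in hand, the proof is purely formal and identical in spirit to the K\"{a}hler case.
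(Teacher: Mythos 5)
Your argument is correct and is essentially the intended one: the paper offers no proof of this lemma, citing it directly from Lemma 7.2.7 of \cite{B-G} (see also \cite{Ta75}, \cite{E-K1}), and those sources establish it exactly as you do, by first obtaining the transverse K\"{a}hler identities for $\partial_b$, $\overline{\partial}_b$ and $\Lambda$ on the transversally K\"{a}hler Reeb foliation and then running the formal commutator computation that kills the cross terms and identifies $\Delta_{\partial_b}$ with $\Delta_{\overline{\partial}_b}$. The only caveat is that transferring the identities through the quotient $M_\xi$ via Theorem~\ref{sym} requires the regular case, so for a general Sasakian manifold one should rely on the direct transverse computation of \cite{Ta75, E-K1} that you mention as the alternative.
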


Also the equality $\overline{\partial}_b^2=0$ induces the differential complex $(F_b^{r,\bullet}(M),\overline{\partial}_b)$; its cohomology groups 
\begin{displaymath}
H_b^{r,s}(M)=\ker\{F_b^{r,s}(M)\stackrel{\overline{\partial}_b}{\rightarrow}F_b^{r,s+1}(M)\}\slash\, \overline{\partial}_b(F_b^{r,s-1}(M)),
\end{displaymath}
are the analogous of Dolbeault cohomology groups from K\"{a}hler geometry and are called the \textit{basic Dolbeault cohomology groups} of the Sasakian manifold $M$,  \cite{B-G}. In particular, there is a transverse Hodge theory for the operator $\overline{\partial}_b$, see  \cite{B-G, E-K2}.

Since $d\eta\in F_b^{1,1}(M)$, as in the previous subsection we consider the subspace $\mathcal{A}_b^{r,s}(M)\subset F_b^{r,s}(M)$ defined by
\begin{displaymath}
\mathcal{A}_b^{r,s}(M)=\{\varphi\in F_b^{r,s}(M)\,|\,\varphi\wedge d\eta=0\}=\ker L|_{F_b^{r,s}(M)}.
\end{displaymath}
A basic form  $\varphi\in\mathcal{A}_b^{r,s}(M)$ is said to be a \textit{coeffective (bigraduate) basic form} of
bidegree$(r,s)$.

From $\overline{\partial}_bd\eta=0$ 
the operator $L$ commutes with the operator $\overline{\partial}_b$. Therefore, we can consider the subcomplex of basic Dolbeault complex of $M$, namely $\left(\mathcal{A}_b^{r,\bullet},\overline{\partial}_b\right)$ for $0\leq r\leq n$; it is called the \textit{ coeffective basic Dolbeault complex} of $M$. The cohomology groups
of this subcomplex are called \textit{coeffective basic Dolbeault cohomology groups} of $M$ and they are denoted by $H^{r,s}(\mathcal{A}_b(M))$. 

Taking into account the decomposition \eqref{basdec}, we obtain the following version of Lemma \ref{surj}, when $L$ acts on $F_b^{r,s}(M)$:
\begin{lemma}
\label{surjD}
The operator $L:F_b^{r,s}(M)\rightarrow F_b^{r+1,s+1}(M)$ defined by $L\varphi=\varphi\wedge d\eta$ is injective for $r+s\leq n-1$ and surjective for $r+s\geq n-1$.
\end{lemma}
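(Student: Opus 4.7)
The strategy is to reduce the bigraded statement to the known Lefschetz property on the Kähler orbit space, mimicking the proof of Lemma \ref{surj} but keeping track of bidegrees.

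First, I would invoke Theorem \ref{sym} together with the fact (already recorded) that when $M$ is Sasakian the orbit space $(M_\xi,\Omega)$ is Kähler and $d\eta=\pi^*\Omega$. The tensor $F$ induces the complex structure $J$ on $M_\xi$, and since $F$ preserves the contact distribution $\mathcal D$ and $\pi_\ast$ identifies $\mathcal D$ with $TM_\xi$, the pullback $\pi^*$ respects the bigrading: it sends forms of type $(r,s)$ on $M_\xi$ isomorphically onto $F_b^{r,s}(M)$. (Indeed the decomposition \eqref{basdec} arose precisely from the splitting $\mathcal D\otimes\mathbb C=\mathcal D^{1,0}\oplus\mathcal D^{0,1}$, which is the pullback of $TM_\xi\otimes\mathbb C=T^{1,0}M_\xi\oplus T^{0,1}M_\xi$.)

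Next, I would cite the Kähler/symplectic Lefschetz fact on $M_\xi$: the operator $L':F^{r,s}(M_\xi)\to F^{r+1,s+1}(M_\xi)$, $L'\varphi'=\varphi'\wedge\Omega$, is injective for $r+s\leq n-1$ and surjective for $r+s\geq n-1$. Injectivity and surjectivity at the level of forms depend only on the symplectic (pointwise linear algebra) structure and the bidegree is preserved by $L'$ because $\Omega$ is of type $(1,1)$. This is the bigraded refinement of the result of Bouché already used in the proof of Lemma \ref{surj}.

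Finally, exactly as in Lemma \ref{surj}, from $d\eta=\pi^*\Omega$ one gets the intertwining relation
\begin{equation*}
L\circ\pi^{*}=\pi^{*}\circ L'
\end{equation*}
as maps $F^{r,s}(M_\xi)\to F_b^{r+1,s+1}(M)$. Since $\pi^*$ is a bijection between $F^{r,s}(M_\xi)$ and $F_b^{r,s}(M)$ (and similarly in bidegree $(r+1,s+1)$), injectivity and surjectivity of $L$ on $F_b^{r,s}(M)$ are equivalent to the corresponding properties of $L'$ on $F^{r,s}(M_\xi)$, yielding the claim.

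The only subtle point, and the one I would spell out carefully, is the compatibility of $\pi^*$ with the bigrading; everything else is a direct transport along the isomorphism of Theorem \ref{sym}. No genuine analytic obstacle arises here because the statement is fiberwise linear algebra, so no Kähler identities or Hodge theory on $M$ are needed.
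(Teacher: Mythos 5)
Your argument is correct, but it takes a different (and somewhat longer) route than the paper. The paper offers no separate proof of Lemma \ref{surjD}: it simply observes that the statement follows from the already-established Lemma \ref{surj} ``taking into account the decomposition \eqref{basdec}.'' The implicit argument is purely formal and stays on $M$: since $d\eta\in F_b^{1,1}(M)$, the operator $L$ shifts bidegree by $(1,1)$; injectivity in bidegree $(r,s)$ is then immediate because $F_b^{r,s}(M)$ is a subspace of the complexification of $F_b^{r+s}(M)$, and surjectivity onto $F_b^{r+1,s+1}(M)$ follows by taking any total-degree preimage $\varphi$ of $\psi\in F_b^{r+1,s+1}(M)$, decomposing $\varphi=\sum_{r'+s'=r+s}\varphi_{r',s'}$ via \eqref{basdec}, and noting that only $L\varphi_{r,s}$ can land in the summand $F_b^{r+1,s+1}(M)$. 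Your proof instead descends again to the orbit space $M_\xi$, checks that $\pi^*$ is compatible with the bigrading, and invokes a bigraded refinement of Bouch\'{e}'s Lefschetz-type result on the K\"{a}hler quotient. This is valid, and the compatibility of $\pi^*$ with types is indeed the point that needs care in your route; but that care (and the appeal to a bigraded statement on $M_\xi$ not literally contained in the cited reference) is avoidable, since the bigraded claim is already a formal consequence of the ungraded Lemma \ref{surj} together with the fact that $d\eta$ has type $(1,1)$. What your route buys in exchange is a self-contained restatement that makes the reduction to pointwise symplectic/K\"{a}hler linear algebra explicit.
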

As a consequence of Lemma \ref{surjD}, one gets
\begin{proposition}
Let $M$ be a regular Sasakian manifold of dimension $2n+1$. Then $\mathcal{A}_b^{r,s}(M)=\{0\}$ for $r+s\leq n-1$, therefore
\begin{equation}
\label{D2}
H^{r,s}(\mathcal{A}_b(M))=\{0\}\,,\,\,{\rm for}\,\,r+s\leq n-1.
\end{equation}
\end{proposition}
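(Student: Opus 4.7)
The statement is an immediate corollary of Lemma \ref{surjD}, mirroring exactly the argument that gave equation \eqref{R2} in the de Rham case. My plan is therefore to extract the bigraded version of that argument in two short steps.

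First, I would observe that by definition
\begin{displaymath}
\mathcal{A}_b^{r,s}(M) = \ker\bigl(L|_{F_b^{r,s}(M)}\bigr),
\end{displaymath}
so the vanishing of $\mathcal{A}_b^{r,s}(M)$ is equivalent to the injectivity of $L$ on $F_b^{r,s}(M)$. Lemma \ref{surjD} asserts precisely that $L: F_b^{r,s}(M) \to F_b^{r+1,s+1}(M)$ is injective whenever $r+s \leq n-1$; hence $\mathcal{A}_b^{r,s}(M) = \{0\}$ in this range.

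Second, I would note that the coeffective basic Dolbeault cohomology group $H^{r,s}(\mathcal{A}_b(M))$ is computed from the complex $(\mathcal{A}_b^{r,\bullet}(M), \overline{\partial}_b)$. Since its space of cochains in bidegree $(r,s)$ with $r+s \leq n-1$ is itself the trivial group, both the cocycles and the coboundaries are zero, and therefore
\begin{displaymath}
H^{r,s}(\mathcal{A}_b(M)) = \{0\}, \qquad r+s \leq n-1.
\end{displaymath}

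There is no serious obstacle here: the entire content sits in Lemma \ref{surjD}, which itself reduces, via the isomorphism $\pi^*: F^{\bullet}(M_\xi) \to F_b^{\bullet}(M)$ of Theorem \ref{sym} and the Kähler structure on the orbit space $(M_\xi, \Omega)$, to the classical fact that wedging with the Kähler form is injective on $(r,s)$-forms of total degree below the middle dimension. The only mild subtlety is remembering that the word \emph{regular} in the hypothesis is used precisely to guarantee that $M_\xi$ is a smooth manifold and that Theorem \ref{sym} applies cleanly; one could also state the result for general $K$-contact manifolds after the appropriate orbifold interpretation, but I would stick with the regular case to match the statement.
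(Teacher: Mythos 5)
Your proof is correct and follows exactly the paper's own (implicit) argument: the paper simply states the proposition ``as a consequence of Lemma \ref{surjD}'', and your two steps --- identifying $\mathcal{A}_b^{r,s}(M)$ with $\ker\bigl(L|_{F_b^{r,s}(M)}\bigr)$ and invoking the injectivity of $L$ for $r+s\leq n-1$, then noting that a complex with trivial cochain groups has trivial cohomology --- are precisely that consequence spelled out. Nothing further is needed.
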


Let us denote by  $[d\eta]_{D}$ the basic Dolbeault class of $d\eta$ in $H_b^{1,1}(M)$ and we consider the subspace of $H^{r,s}_{b}(M)$ given by the basic Dolbeault cohomology classes truncated by the class $[d\eta]_{D}$, namely,
\begin{equation}
\label{D3}
\widetilde{H}^{r,s}_{b}(M)=\{a\in H^{r,s}_{b}(M)\,|\,a\wedge[d\eta]_{D}=0\}.
\end{equation}

Next we define the mapping $\alpha_{r,s}:H^{r,s}(\mathcal{A}_b(M))\rightarrow\widetilde{H}^{r,s}_{b}(M)$ by
\begin{equation}
\label{D4}
\alpha_{r,s}\left(\{\varphi\}_{D}\right)=[\varphi]_{D},
\end{equation}
where $\{\varphi\}_{D}$ denotes the cohomology class of a coeffective basic form $\varphi$ in $H^{r,s}(\mathcal{A}_b(M))$  and  $[\varphi]_{D}$ denotes the cohomology class of a basic form $\varphi$ in $H^{r,s}_{b}(M)$.  This mapping permits us to give a relation between the coeffective basic Dolbeault cohomology groups of the Sasakian manifold $M$ and the subspaces of the basic Dolbeault cohomology groups given by \eqref{D3}, namely
\begin{proposition}
\label{surjalphaD}
If $M$ is a regular Sasakian manifold of dimension $2n+1$, the mapping $\alpha_{r,s}$ defined by \eqref{D4} is surjective for $r+s\geq n$.
\end{proposition}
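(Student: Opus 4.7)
The plan is to adapt, verbatim, the argument of Proposition~\ref{alphadR}, replacing $d_b$ throughout by $\overline{\partial}_b$ and invoking the bigraded surjectivity Lemma~\ref{surjD} in place of Lemma~\ref{surj}. The two structural facts that make the adaptation go through are the bidegree-refined surjectivity of $L$ in the range $r+s \geq n-1$, and the identity $L \overline{\partial}_b = \overline{\partial}_b L$, which follows from $\overline{\partial}_b d\eta = 0$: indeed $d(d\eta)=0$ decomposes as $\partial_b d\eta + \overline{\partial}_b d\eta = 0$ with $\partial_b d\eta$ of type $(2,1)$ and $\overline{\partial}_b d\eta$ of type $(1,2)$, so each vanishes separately.

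Given a class $a \in \widetilde{H}_b^{r,s}(M)$ with $r+s \geq n$, I would pick a $\overline{\partial}_b$-closed representative $\varphi \in F_b^{r,s}(M)$. If $\varphi \in \mathcal{A}_b^{r,s}(M)$ already, then $\{\varphi\}_D$ is a preimage of $a$ under $\alpha_{r,s}$ and there is nothing to do. Otherwise, the truncation condition $a \wedge [d\eta]_D = 0$ in $H_b^{r+1,s+1}(M)$ provides $\sigma \in F_b^{r+1,s}(M)$ with $\varphi \wedge d\eta = \overline{\partial}_b \sigma$. The hypothesis $r+s \geq n$, equivalently $r+(s-1) \geq n-1$, is exactly what Lemma~\ref{surjD} requires to produce $\gamma \in F_b^{r,s-1}(M)$ with $L\gamma = \sigma$. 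Setting $\varphi' := \varphi - \overline{\partial}_b \gamma$, one checks immediately that $\overline{\partial}_b \varphi' = 0$ and that $L \varphi' = \varphi \wedge d\eta - \overline{\partial}_b L \gamma = \varphi \wedge d\eta - \overline{\partial}_b \sigma = 0$, so $\varphi' \in \mathcal{A}_b^{r,s}(M)$ represents a coeffective basic Dolbeault class with $\alpha_{r,s}(\{\varphi'\}_D) = [\varphi']_D = [\varphi]_D = a$.

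The only genuine wrinkle is the boundary case $s=0$, in which $F_b^{r,s-1}=0$ and the $\gamma$-step is vacuous. But there the assumption $r+s \geq n$ forces $r = n$ (since basic $(r,0)$-forms exist only for $r \leq n$), and then the target $F_b^{r+1,s}=F_b^{n+1,0}$ also vanishes; consequently $\sigma$ is forced to be zero, $\varphi \wedge d\eta = 0$ automatically, and $\varphi$ itself is already coeffective, so the trivial subcase applies. I expect this corner case to be the main --- essentially only --- obstacle; the remainder of the proof is a mechanical bigraded transcription of the de Rham argument in Proposition~\ref{alphadR}.
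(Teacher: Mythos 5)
Your proof is correct and is exactly the argument the paper intends: the paper's own proof of this proposition is just a one-line reference to ``the same technique as in Proposition~\ref{alphadR}'', and your bigraded transcription (using Lemma~\ref{surjD} with $r+(s-1)\geq n-1$ to solve $L\gamma=\sigma$, then correcting $\varphi$ by $\overline{\partial}_b\gamma$) is precisely that technique carried out, with the $s=0$ corner case handled correctly as well.
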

\begin{proof}
It follows in a similar manner with the proof of Proposition 2.6 from \cite{I} (for K\"{a}hler manifolds) using the same technique as in Proposition \ref{alphadR}.
\end{proof}

In the following, we relate  the  coeffective basic Dolbeault cohomology groups  and the subspaces of the basic Dolbeault cohomology groups given by \eqref{D3} for compact Sasakian manifolds and we prove a coeffective version of the basic Hodge decomposition theorem for  coeffective basic Dolbeault cohomology.

Now by Lemma \ref{eqlap} and taking into account that $\Delta_{\overline{\partial}_b}$ preserves the bigraduation of basic forms, we have the following version of Lemma \ref{surjharmonic} when $L$ acts on the space $\mathcal{H}_b^{r,s}(M)=\ker\Delta_{\overline{\partial}_b}$ of harmonic basic forms of type $(r,s)$:
\begin{lemma}
\label{surjharmonicD}
The operator $L:\mathcal{H}^{s-1,s-1}_{b}(M)\rightarrow\mathcal{H}^{r,s}_{b}(M)$ is surjective for $r+s\geq n+1$.
\end{lemma}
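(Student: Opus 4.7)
The plan is to mirror the argument of Lemma \ref{surjharmonic} in the Dolbeault setting, with $\Delta_{\overline{\partial}_b}$ in place of $\Delta_b$ and Lemma \ref{surjD} in place of Lemma \ref{surj}. The two essential inputs are: (i) the fact that $L$ commutes with $\Delta_{\overline{\partial}_b}$ on a compact Sasakian manifold, and (ii) the transverse Hodge decomposition for $\overline{\partial}_b$ (stated in the paragraph following Lemma \ref{eqlap}). For (i), note that by Lemma \ref{eqlap} we have $\Delta_{\overline{\partial}_b}=\tfrac12\Delta_b$, so the commutation $[L,\Delta_{\overline{\partial}_b}]=0$ follows at once from \eqref{C5}.

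First I would fix $\psi\in\mathcal{H}^{r,s}_b(M)$ with $r+s\geq n+1$ and apply Lemma \ref{surjD}: since $(r-1)+(s-1)=r+s-2\geq n-1$, there is some (not necessarily harmonic) basic form $\varphi\in F_b^{r-1,s-1}(M)$ with $L\varphi=\psi$. The task is to correct $\varphi$ so as to make it harmonic while preserving $L\varphi=\psi$.

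For this correction, I would invoke the transverse Hodge decomposition
\begin{displaymath}
F_b^{r-1,s-1}(M)=\mathcal{H}_b^{r-1,s-1}(M)\oplus\operatorname{Im}\Delta_{\overline{\partial}_b},
\end{displaymath}
and write $\varphi=h+\Delta_{\overline{\partial}_b}\eta$ with $h\in\mathcal{H}_b^{r-1,s-1}(M)$. Using $[L,\Delta_{\overline{\partial}_b}]=0$ I obtain
\begin{displaymath}
\psi=L\varphi=Lh+L\Delta_{\overline{\partial}_b}\eta=Lh+\Delta_{\overline{\partial}_b}(L\eta).
\end{displaymath}
Since the left-hand side is harmonic and $Lh$ is harmonic (as $L$ preserves $\ker\Delta_{\overline{\partial}_b}$, again by the commutation), the term $\Delta_{\overline{\partial}_b}(L\eta)$ lies in $\mathcal{H}_b^{r,s}(M)\cap\operatorname{Im}\Delta_{\overline{\partial}_b}=\{0\}$. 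Hence $\psi=Lh$ with $h\in\mathcal{H}_b^{r-1,s-1}(M)$, which proves surjectivity.

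There is no serious obstacle: once the Laplacian-$L$ commutation is in hand, the Hodge decomposition does all the work. The only point that deserves care is verifying that the Kähler-type commutation $[L,\Delta_{\overline{\partial}_b}]=0$ really does follow from the already-recorded identity \eqref{C5} via Lemma \ref{eqlap}; an alternative, equally clean route would be to invoke directly the transverse Kähler identities on compact Sasakian manifolds (cited in the paper via \cite{Ta75} and \cite{B-G}), which yield $[L,\Delta_{\overline{\partial}_b}]=0$ just as in the Kähler case.
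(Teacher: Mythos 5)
Your proof is correct and takes essentially the same route the paper intends: the transverse Hodge decomposition for $\overline{\partial}_b$, the surjectivity of $L$ on forms (Lemma \ref{surjD}), and the commutation of $L$ with the Laplacian obtained from Lemma \ref{eqlap} together with \eqref{C5}, which is exactly the Dolbeault analogue of the argument the paper gives for Lemma \ref{surjharmonic}. One cosmetic remark: avoid reusing the symbol $\eta$ for the potential in the Hodge decomposition, since it already denotes the contact form.
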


\begin{theorem}
\label{thmain1D}
For a compact Sasakian manifold $M$ of  dimension $2n+1$, we have
\begin{equation}
\label{D7}
H^{r,s}(\mathcal{A}_b(M))\cong \widetilde{H}^{r,s}_{b}(M),
\end{equation}
for $r+s\neq n$.
\end{theorem}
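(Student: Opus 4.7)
The plan is to adapt the proof of Theorem \ref{main1dR} (the de Rham analog) essentially verbatim to the Dolbeault setting, exploiting that $L$ commutes with $\overline{\partial}_b$ (since $d\eta$ is a closed basic $(1,1)$-form), that $L$ preserves the $\Delta_{\overline{\partial}_b}$-harmonic forms (consequence of $L\Delta_b=\Delta_b L$ from \eqref{C5} together with Lemma \ref{eqlap}, which gives $\Delta_{\overline{\partial}_b}=\frac{1}{2}\Delta_b$), and that the basic Dolbeault Hodge decomposition $F_b^{r,s}(M)=\mathrm{Im}\,\overline{\partial}_b\oplus\mathrm{Im}\,\overline{\partial}_b^{\,*}\oplus\mathcal{H}_b^{r,s}(M)$ holds on compact Sasakian manifolds. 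I would split the argument in two cases according to whether $r+s\leq n-1$ or $r+s\geq n+1$.

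For the case $r+s\leq n-1$, the preceding proposition already yields $H^{r,s}(\mathcal{A}_b(M))=\{0\}$, so I only need to show $\widetilde H_b^{r,s}(M)=\{0\}$. I would represent a class $a\in\widetilde H_b^{r,s}(M)$ by its unique harmonic representative $\varphi\in\mathcal{H}_b^{r,s}(M)$. Since $L$ commutes with $\Delta_{\overline{\partial}_b}$, the form $L\varphi$ is also harmonic; but $a\wedge[d\eta]_D=0$ means $L\varphi$ is $\overline{\partial}_b$-exact, so by the Hodge decomposition $L\varphi=0$. Injectivity of $L$ on $F_b^{r,s}(M)$ for $r+s\leq n-1$ (Lemma \ref{surjD}) then forces $\varphi=0$.

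For the case $r+s\geq n+1$, surjectivity of $\alpha_{r,s}$ is Proposition \ref{surjalphaD}, so I need injectivity. Given $a=\{\varphi\}_D$ with $\alpha_{r,s}(a)=0$, write $\varphi=\overline{\partial}_b\psi$ for some $\psi\in F_b^{r,s-1}(M)$; I may assume $\psi\notin\mathcal{A}_b^{r,s-1}(M)$, else we are done. Since $L$ commutes with $\overline{\partial}_b$, the form $L\psi\in F_b^{r+1,s}(M)$ is $\overline{\partial}_b$-closed, so by the basic Dolbeault Hodge decomposition $L\psi=h+\overline{\partial}_b\gamma$ with $h\in\mathcal{H}_b^{r+1,s}(M)$ and $\gamma\in F_b^{r+1,s-1}(M)$. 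Because $(r+1)+s\geq n+2\geq n+1$, Lemma \ref{surjharmonicD} produces $\sigma\in\mathcal{H}_b^{r,s-1}(M)$ with $L\sigma=h$, and because $r+(s-2)\geq n-1$, Lemma \ref{surjD} produces $\sigma_1\in F_b^{r,s-2}(M)$ with $L\sigma_1=\gamma$. Then $\psi':=\psi-\sigma-\overline{\partial}_b\sigma_1$ satisfies $L\psi'=0$ and, since $\sigma$ is harmonic and hence $\overline{\partial}_b\sigma=0$, one has $\overline{\partial}_b\psi'=\varphi$. Thus $\psi'\in\mathcal{A}_b^{r,s-1}(M)$ trivializes $a$ in $H^{r,s}(\mathcal{A}_b(M))$.

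The only genuine technical obstacle is the commutation $L\Delta_{\overline{\partial}_b}=\Delta_{\overline{\partial}_b}L$, needed to conclude that $L$ sends $\mathcal{H}_b^{r,s}(M)$ into $\mathcal{H}_b^{r+1,s+1}(M)$ (so that the surjectivity in Lemma \ref{surjharmonicD} makes sense and $L\varphi$ remains harmonic in Case 1). This follows from \eqref{C5} combined with Lemma \ref{eqlap}, both of which are already established; everything else is bookkeeping parallel to the proof of Theorem \ref{main1dR}.
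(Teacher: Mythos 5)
Your proposal is correct and is exactly the argument the paper intends: the paper's proof of this theorem simply defers to the de Rham case (Theorem \ref{main1dR}) and its K\"ahler analogue in \cite{I}, and your two-case adaptation — using $\Delta_{\overline{\partial}_b}L=L\Delta_{\overline{\partial}_b}$ (from \eqref{C5} and Lemma \ref{eqlap}), the transverse Hodge decomposition for $\overline{\partial}_b$, and Lemmas \ref{surjD} and \ref{surjharmonicD} with the index bookkeeping you carried out — is precisely that adaptation. No issues.
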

\begin{proof}
It follows in a similar manner with the proof of Theorem 3.2 from \cite{I} (for compact K\"{a}hler manifolds), using the same technique as in Theorem \ref{main1dR}.
\end{proof}

Now, using the above result, by similar arguments as in the proof of Theorem 3.3 from \cite{I} (for K\"{a}hler manifolds) we will obtain a Hodge decomposition theorem for coeffective basic Dolbeault cohomology of compact Sasakian manifolds. 
\begin{theorem}
\label{thmain2D}
If $M$ is a compact Sasakian manifold of dimension $2n+1$ then we have
\begin{enumerate}
\item [i)] $\widetilde{H}^p(M)\cong\bigoplus_{r+s}^p\widetilde{H}_{b}^{r,s}(M)$.

\item [ii)] $H^p(\mathcal{A}_b(M))\cong\bigoplus_{r+s}^pH^{r,s}(\mathcal{A}_b(M))$ for $r+s\geq n+1$.
\end{enumerate}
\end{theorem}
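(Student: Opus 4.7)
The plan is to prove (i) by exploiting the fact that $d\eta$ is a basic form of bidegree $(1,1)$, so that the truncation by $[d\eta]$ is compatible with the basic Hodge decomposition; then to deduce (ii) by combining (i) with the two isomorphism theorems already established (Theorem \ref{main1dR} and Theorem \ref{thmain1D}).

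For part (i), I would first recall that on a compact Sasakian manifold the transverse Hodge theory gives the bigraded Hodge decomposition
\begin{displaymath}
H_b^p(M)\cong\bigoplus_{r+s=p}H_b^{r,s}(M),
\end{displaymath}
which follows from Lemma \ref{eqlap} together with the harmonic representation theorem for basic forms. Given $a\in\widetilde{H}_b^p(M)$, I decompose $a=\sum_{r+s=p}a^{r,s}$ with $a^{r,s}\in H_b^{r,s}(M)$. Since $[d\eta]_D\in H_b^{1,1}(M)$, the cup product $a^{r,s}\wedge[d\eta]$ lives in $H_b^{r+1,s+1}(M)$, and the various components $a^{r,s}\wedge[d\eta]$ for $r+s=p$ lie in pairwise distinct bidegree summands of $H_b^{p+2}(M)$. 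Hence the vanishing of their sum forces the vanishing of each piece, so $a^{r,s}\in\widetilde{H}_b^{r,s}(M)$. The reverse inclusion is immediate since any sum of truncated $(r,s)$-classes is again annihilated by $[d\eta]$, which yields the direct sum decomposition claimed in (i).

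For part (ii), assuming $p\geq n+1$ I simply chain the isomorphisms: by Theorem \ref{main1dR} one has $H^p(\mathcal{A}_b(M))\cong\widetilde{H}_b^p(M)$, by part (i) this equals $\bigoplus_{r+s=p}\widetilde{H}_b^{r,s}(M)$, and by Theorem \ref{thmain1D} each summand is isomorphic to $H^{r,s}(\mathcal{A}_b(M))$ (the hypothesis $r+s=p\geq n+1$ ensures $r+s\neq n$, so Theorem \ref{thmain1D} applies to every summand). Composing these gives the desired decomposition.

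The main obstacle will be the rigorous verification in part (i) that the constraint $a\wedge[d\eta]=0$ decomposes cleanly across the Hodge types. This rests entirely on the fact that $d\eta$ is of pure basic bidegree $(1,1)$ and that the basic Hodge decomposition of $H_b^{p+2}(M)$ is a genuine direct sum of subspaces indexed by bidegree, which is exactly what Lemma \ref{eqlap} (via $\Delta_b=2\Delta_{\overline{\partial}_b}$) guarantees in the compact Sasakian setting. Once this compatibility is clarified, part (ii) follows by purely formal manipulation of the already-established isomorphisms.
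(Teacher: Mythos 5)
Your proposal is correct and follows essentially the same route as the paper: both parts rest on the transverse Hodge decomposition (via Lemma \ref{eqlap}) together with the fact that $d\eta$ has pure bidegree $(1,1)$, so the truncation condition splits across bidegrees, and part (ii) is the same formal chaining of Theorems \ref{main1dR} and \ref{thmain1D}. The only cosmetic difference is that you argue at the level of cohomology classes (components $a^{r,s}\wedge[d\eta]$ landing in distinct summands of $H_b^{p+2}(M)$), while the paper passes to the harmonic representative $\varphi$ and splits the pointwise identity $\varphi\wedge d\eta=0$ by type --- which is exactly what justifies the compatibility you invoke.
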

\begin{proof}
Let $a\in \widetilde{H}^p_b(M)$ and $\varphi $ a representative of $a$. Without loss the generality we can assume that $\varphi$ is basic harmonic. From \eqref{basdec} we have the decomposition
\begin{displaymath}
\varphi=\varphi_{p,0}+\ldots+\varphi_{r,s}+\ldots+\varphi_{0,p},
\end{displaymath}
and taking into account that $\ker\Delta_b=\ker\Delta_{\overline{\partial}_b}$, from $\Delta_{\overline{\partial}_b}\varphi=\Delta_b\varphi=0$ and since $\Delta_{\overline{\partial}_b}$ preserves the bigraduation, we have
\begin{displaymath}
\Delta_{\overline{\partial}_b}\varphi_{p,0}=\ldots=\Delta_{\overline{\partial}_b}\varphi_{r,s}=\ldots=\Delta_{\overline{\partial}_b}\varphi_{0,p}=0.
\end{displaymath}
Moreover, since $d\eta$ is of bidegree $(1,1)$ basic form and $\varphi\wedge d\eta=0$, we have
\begin{displaymath}
\varphi_{p,0}\wedge d\eta=\ldots=\varphi_{r,s}\wedge d\eta=\ldots=\varphi_{0,p}\wedge d\eta=0.
\end{displaymath}
Taking into account the Hodge theory for basic forms on Sasakian manifolds, see \cite{B-G}, we have 
\begin{equation}
\label{D8}
\widetilde{H}_{b}^{r,s}(M)\cong\left\{\varphi\in\mathcal{H}_{b}^{r,s}(M)\,|\,\varphi\wedge d\eta\in\overline{\partial}_b\left(F_b^{r+1,s}(M)\right)\right\}\cong\left\{\varphi\in\mathcal{H}^{r,s}_{b}(M)\,|\,\varphi\wedge d\eta=0\right\}.
\end{equation}
Thus, part i) follows by \eqref{D8}.

Now, from part i), Theorem \ref{main1dR} and Theorem \ref{thmain1D}, we have
\begin{displaymath}
H^p(\mathcal{A}_b(M))\cong\widetilde{H}^p_b(M)\cong\bigoplus_{r+s=p}\widetilde{H}^{r,s}_{b}(M)\cong\bigoplus_{r+s=p}H^{r,s}(\mathcal{A}_b(M)),
\end{displaymath}
and it follows part ii).
\end{proof}

Let us denote by $c_b^{r,s}(M)=\dim H^{r,s}(\mathcal{A}_b(M))$.
\begin{corollary}
For a compact Sasakian manifold of dimension $2n+1$ we have
\begin{displaymath}
c_b^p(M)=\sum_{r+s=p}c_b^{r,s}(M),
\end{displaymath}
for $p\geq n+1$.
\end{corollary}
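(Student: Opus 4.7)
The plan is extremely short: this corollary is just the dimensional shadow of part ii) of Theorem \ref{thmain2D}. I would argue as follows.

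First, I would invoke Theorem \ref{thmain2D} ii), which gives the direct sum decomposition
\begin{displaymath}
H^p(\mathcal{A}_b(M))\cong\bigoplus_{r+s=p}H^{r,s}(\mathcal{A}_b(M))
\end{displaymath}
valid for $p\geq n+1$. Taking real (or complex) dimension of both sides and recalling the definitions $c_b^p(M)=\dim H^p(\mathcal{A}_b(M))$ and $c_b^{r,s}(M)=\dim H^{r,s}(\mathcal{A}_b(M))$ yields the claimed formula immediately.

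The only thing that needs checking is that every number appearing is finite, so that taking dimensions and summing is legitimate. For the left-hand side, finiteness is furnished by Proposition \ref{finitedim} together with the compactness of $M$ (which guarantees the finite-dimensionality of the basic Betti numbers $b_b^p(M)$ in the range $p\geq n+1$). For the right-hand summands, finiteness follows from Theorem \ref{thmain1D}, which identifies $H^{r,s}(\mathcal{A}_b(M))$ with the truncated basic Dolbeault cohomology group $\widetilde{H}^{r,s}_b(M)\subset H^{r,s}_b(M)$; the transverse Hodge theory for $\overline{\partial}_b$ on a compact Sasakian manifold (see \cite{B-G, E-K2}) ensures $\dim H_b^{r,s}(M)<\infty$.

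No step here is really an obstacle: the real content has already been carried by Theorem \ref{thmain2D} ii), whose proof combines Theorem \ref{main1dR}, Theorem \ref{thmain1D}, and the bigraduation-preserving property of $\Delta_{\overline{\partial}_b}$. The corollary is a one-line consequence, so I would present it as such, remarking only on the finite-dimensionality justification for the sake of completeness.
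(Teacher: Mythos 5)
Your proposal is correct and matches the paper's intent exactly: the corollary is stated immediately after Theorem \ref{thmain2D} with no separate proof, precisely because it is the dimensional consequence of part ii) that you describe. Your added remarks on finite-dimensionality (via Proposition \ref{finitedim} and the transverse Hodge theory behind Theorem \ref{thmain1D}) are a sensible completeness check that the paper leaves implicit.
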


\begin{remark}
Using the same technique as in the previous subsection we can relate the  coeffective basic Dolbeault cohomology of Sasakian manifolds by means of a long exact sequence in basic cohomology and we can prove that
\begin{displaymath}
h^{r,s}_b(M)-h_b^{r+1,s+1}(M)\leq c_b^{r,s}(M)\leq h_b^{r,s}(M)+h_b^{r,s+1}(M),\,\,\forall\,r+s\geq n+1,
\end{displaymath}
where $h_b^{r,s}(M)=\dim H^{r,s}_b(M)$ are the basic Hodge $(r,s)$--numbers of $M$.

Also, when $M$ is compact we obtain
\begin{displaymath}
c_b^{r,s}(M)=h^{r,s}_b(M)-h_b^{r+1,s+1}(M),\,\,\forall\,r+s\geq n+1.
\end{displaymath}
\end{remark}

\section{Coeffective basic Bott-Chern cohomology}
\setcounter{equation}{0}

In this section we firstly define basic Bott-Chern and Aeppli cohomology of a Sasakian manifold $M$ and we obtain a Hodge-Bott-Chern decomposition theorem for basic forms of $M$. Next, in similar manner with the study of  coeffective basic de Rham and Dolbeault cohomology of $M$, we define and study a coeffective Bott-Chern cohomology for basic forms on $M$. 

\subsection{Hodge-Bott-Chern decomposition for basic forms}

In the first part of this subsection, we define the basic Bott-Chern and Aeppli cohomology groups of $M$. In the second part we define a basic Bott-Chern Laplacian and we obtain a Hodge-Bott-Chern type decomposition theorem for basic forms on $M$.

\begin{definition}
The differential complex
\begin{equation}
\ldots\longrightarrow F^{r-1,s-1}_b(M)\stackrel{\partial_{b}\overline{\partial}_{b}}{\longrightarrow}F^{r,s}_b(M)\stackrel{\partial_{b}\oplus\overline{\partial}_{b}}{\longrightarrow}F^{r+1,s}_b(M)\oplus F^{r,s+1}_b(M)\longrightarrow\ldots
\label{II1}
\end{equation}
is called the \textit{basic Bott-Chern complex} of $M$ and the \textit{basic Bott-Chern cohomology groups} of $M$ of bidegree $(r,s)$, are given by
\begin{displaymath}
H^{r,s}_{b,BC}(M)=\frac{\ker\{\partial_b:F^{r,s}_b(M)\rightarrow F^{r+1,s}_b(M)\}\cap\ker\{\overline{\partial}_b:F^{r,s}_b(M)\rightarrow F^{r,s+1}_b(M)\}}{{\rm Im} \{\partial_b\overline{\partial}_b:F^{r-1,s-1}_b(M)\rightarrow F^{r,s}_b(M)\}}.
\end{displaymath}
\end{definition}

Next, we consider the dual of the basic Bott-Chern cohomology groups, given by
\begin{displaymath}
H^{r,s}_{b,A}(M)=\frac{\ker\{\partial_b\overline{\partial}_b:F^{r,s}_b(M)\rightarrow F^{r+1,s+1}_b(M)\}}{{\rm Im}\{\partial_b:F^{r-1,s}_b(M)\rightarrow F^{r,s}_b(M)\}+{\rm Im}\{\overline{\partial}_b:F^{r,s-1}_b(M)\rightarrow F^{r,s}_b(M)\}}
\end{displaymath}
called the \textit{basic Aeppli cohomology groups} of bidegree $(r,s)$ of $M$.
\begin{proposition}
The exterior product induces a bilinear map
\begin{equation}
\wedge:H^{p,q}_{b,BC}(M)\times H_{b,A}^{r,s}(M)\rightarrow H^{p+r,q+s}_{b,A}(M).
\label{II7}
\end{equation}
\end{proposition}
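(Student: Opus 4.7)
The plan is to verify that the wedge product $\alpha \wedge \beta$ of a representative $\alpha$ of a basic Bott--Chern class and a representative $\beta$ of a basic Aeppli class lands in $\ker(\partial_b\overline{\partial}_b)$, and then to check that the resulting Aeppli class is independent of the choices of representatives. Bilinearity will then be immediate from the distributivity of the wedge product over addition.

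First I would check that $\partial_b\overline{\partial}_b(\alpha\wedge\beta)=0$. Since $\alpha\in\ker\partial_b\cap\ker\overline{\partial}_b$, the Leibniz rule combined with $\partial_b\overline{\partial}_b+\overline{\partial}_b\partial_b=0$ (coming from $d_b^2=0$) collapses almost every term: $\partial_b(\alpha\wedge\beta)=(-1)^{p+q}\alpha\wedge\partial_b\beta$, and then $\overline{\partial}_b$ applied to this equals $(-1)^{p+q}(-1)^{p+q}\alpha\wedge\overline{\partial}_b\partial_b\beta=-\alpha\wedge\partial_b\overline{\partial}_b\beta=0$, using also $\overline{\partial}_b\alpha=0$ and $\partial_b\overline{\partial}_b\beta=0$ (the Aeppli cocycle condition). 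Hence $\alpha\wedge\beta$ defines an element in $H^{p+r,q+s}_{b,A}(M)$.

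Next I would test the choice of representative for $\alpha$. If $\alpha$ is replaced by $\alpha+\partial_b\overline{\partial}_b\gamma$ with $\gamma\in F_b^{p-1,q-1}(M)$, then the change is $(\partial_b\overline{\partial}_b\gamma)\wedge\beta$, which I intend to rewrite as $\partial_b(\,\cdot\,)+\overline{\partial}_b(\,\cdot\,)$. The identity
\[
\overline{\partial}_b(\partial_b\gamma\wedge\beta)=(\overline{\partial}_b\partial_b\gamma)\wedge\beta+(-1)^{p+q-1}\partial_b\gamma\wedge\overline{\partial}_b\beta
=-(\partial_b\overline{\partial}_b\gamma)\wedge\beta+(-1)^{p+q-1}\partial_b\gamma\wedge\overline{\partial}_b\beta,
\]
together with $\partial_b(\gamma\wedge\overline{\partial}_b\beta)=\partial_b\gamma\wedge\overline{\partial}_b\beta$ (using $\partial_b\overline{\partial}_b\beta=0$), gives
\[
(\partial_b\overline{\partial}_b\gamma)\wedge\beta=-\overline{\partial}_b(\partial_b\gamma\wedge\beta)+(-1)^{p+q-1}\partial_b(\gamma\wedge\overline{\partial}_b\beta),
\]
which lies in $\mathrm{Im}\,\partial_b+\mathrm{Im}\,\overline{\partial}_b$. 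For the choice of representative of $\beta$, I would replace $\beta$ by $\beta+\partial_b\delta+\overline{\partial}_b\epsilon$. Because $\partial_b\alpha=\overline{\partial}_b\alpha=0$, the Leibniz rule gives $\alpha\wedge\partial_b\delta=(-1)^{p+q}\partial_b(\alpha\wedge\delta)$ and $\alpha\wedge\overline{\partial}_b\epsilon=(-1)^{p+q}\overline{\partial}_b(\alpha\wedge\epsilon)$, both of which are Aeppli coboundaries.

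The only mildly delicate point is the first representative change, because the identity used to rewrite $(\partial_b\overline{\partial}_b\gamma)\wedge\beta$ as a sum of a $\partial_b$- and an $\overline{\partial}_b$-exact term crucially requires the Aeppli cocycle condition on $\beta$; without it one cannot move $\partial_b\gamma\wedge\overline{\partial}_b\beta$ into $\mathrm{Im}\,\partial_b$. Aside from this, every step is a direct computation with the graded Leibniz rule for $\partial_b$ and $\overline{\partial}_b$ and the identities $\partial_b^2=\overline{\partial}_b^2=\partial_b\overline{\partial}_b+\overline{\partial}_b\partial_b=0$ recorded in \eqref{e383}.
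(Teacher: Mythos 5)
Your argument is correct and follows essentially the same route as the paper's proof: a direct Leibniz-rule verification, using \eqref{e383}, that a Bott--Chern cocycle wedged with an Aeppli cocycle is $\partial_b\overline{\partial}_b$--closed, and that changing either representative only alters the product by an element of ${\rm Im}\,\partial_b+{\rm Im}\,\overline{\partial}_b$. The only difference is cosmetic: for the term $(\partial_b\overline{\partial}_b\gamma)\wedge\beta$ you exhibit explicit $\partial_b$-- and $\overline{\partial}_b$--primitives, whereas the paper packages it as $\tfrac{1}{2}d_b\left[(\overline{\partial}_b-\partial_b)\theta\wedge\psi+(-1)^{r+s}\theta\wedge(\partial_b-\overline{\partial}_b)\psi\right]$ (an identity that holds only after projecting onto the relevant bidegree), so your bookkeeping is if anything the cleaner of the two.
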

\begin{proof}
Let $\varphi,\psi\in F^{r,s}_b(M)$. If $\varphi$ is $d_b$--closed and $\psi$ is $\partial_b\overline{\partial}_b$--closed then $\varphi\wedge\psi$ is $\partial_b\overline{\partial}_b$--closed. Also, if $\varphi$ is $d_b$--closed and $\psi$ is $d_b$--exact then $\varphi\wedge\psi$ is $d_b$--exact and if $\varphi$ is $\partial_b\overline{\partial}_b$--exact and $\psi$ is $\partial_b\overline{\partial}_b$--closed then $\varphi\wedge\psi$ is $d_b$--exact. 

For the last assertion, we have

$\varphi\wedge\psi=\partial_b\overline{\partial}_b\theta\wedge\psi=\frac{1}{2}d_b[(\overline{\partial}_b-\partial_b)\theta\wedge\psi+(-1)^{r+s}\theta\wedge(\partial_b-\overline{\partial}_b)\psi]$.
\end{proof}
In particular, 
\begin{displaymath}
H^{r,s}_{b,BC}(M)\times H^{n-r,n-s}_{b,A}(M)\rightarrow H^{n,n}_{b,A}(M)=H_b^{2n}(M)\cong\mathbb{R}.
\end{displaymath}

In the following, we define the \textit{ Bott-Chern Laplacian} for basic forms of type $(r,s)$ by
\begin{equation}
\Delta_{BC}^b=\partial_b\overline{\partial}_b(\partial_b\overline{\partial}_b)^\ast+\partial_b^\ast\partial_b+\overline{\partial}_b^\ast\overline{\partial}_b.
\label{III3}
\end{equation}
This operator is self-adjoint, i.e. $\langle\Delta^b_{BC}\varphi,\psi\rangle_b=\langle\varphi,\Delta^b_{BC}\psi\rangle_b$. Also, for a basic form  $\varphi\in F^{r,s}_b(M)$ we have
\begin{eqnarray*}
\langle\Delta^b_{BC}\varphi,\varphi\rangle_b &=& \langle \partial_b\overline{\partial}_b(\partial_b\overline{\partial}_b)^\ast\varphi+\partial_b^\ast\partial_b\varphi+\overline{\partial}_b^\ast\overline{\partial}_b\varphi, \varphi\rangle_b\\
&=& \langle(\partial_b\overline{\partial}_b)^*\varphi, (\partial_b\overline{\partial}_b)^*\varphi\rangle_b+\langle\partial_b\varphi, \partial_b\varphi\rangle_b+\langle\overline{\partial}_b\varphi, \overline{\partial}_b\varphi\rangle_b\\
&=& ||(\partial_b\overline{\partial}_b)^*\varphi||^2+||\partial_b\varphi||^2+||\overline{\partial}_b\varphi||^2
\end{eqnarray*}
where $||\varphi||^2=\langle\varphi,\varphi\rangle_b.$ Thus, we obtain
\begin{proposition}
$\Delta^b_{BC}\varphi=0$ if and only if $(\partial_b\overline{\partial}_b)^*\varphi=\partial_b\varphi=\overline{\partial}_b\varphi=0$.
\end{proposition}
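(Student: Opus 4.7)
The plan is to use the norm computation that appears just before the statement, which already expresses $\langle\Delta^b_{BC}\varphi,\varphi\rangle_b$ as a sum of three squared norms. The proof reduces to the observation that a sum of non-negative real numbers vanishes if and only if each summand vanishes, combined with the non-degeneracy of the basic inner product $\langle\,,\,\rangle_b$ on the space of basic forms.

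For the ``only if'' direction, I would assume $\Delta^b_{BC}\varphi=0$ and pair both sides with $\varphi$ in $\langle\,,\,\rangle_b$. Using the self-adjointness of $\partial_b^\ast\partial_b$, $\overline{\partial}_b^\ast\overline{\partial}_b$, and $\partial_b\overline{\partial}_b(\partial_b\overline{\partial}_b)^\ast$ (the latter being obvious since it is of the form $AA^\ast$), the identity displayed above gives
\[
0=\langle\Delta^b_{BC}\varphi,\varphi\rangle_b=\|(\partial_b\overline{\partial}_b)^\ast\varphi\|^2+\|\partial_b\varphi\|^2+\|\overline{\partial}_b\varphi\|^2.
\]
Since each term is non-negative, each must be zero, and since $\|\cdot\|$ comes from a positive-definite inner product on basic forms, this forces $(\partial_b\overline{\partial}_b)^\ast\varphi=\partial_b\varphi=\overline{\partial}_b\varphi=0$.

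The ``if'' direction is immediate: if $\partial_b\varphi=0$ and $\overline{\partial}_b\varphi=0$ then the last two terms $\partial_b^\ast\partial_b\varphi$ and $\overline{\partial}_b^\ast\overline{\partial}_b\varphi$ of $\Delta^b_{BC}\varphi$ vanish, and if additionally $(\partial_b\overline{\partial}_b)^\ast\varphi=0$ then $\partial_b\overline{\partial}_b(\partial_b\overline{\partial}_b)^\ast\varphi=0$ as well, so $\Delta^b_{BC}\varphi=0$ by the definition \eqref{III3}.

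There is no real obstacle here; the only mild subtlety is ensuring that $\langle\,,\,\rangle_b$, which was originally defined on $F_b^p(M)$, extends to complex-valued basic forms of type $(r,s)$ as a positive-definite Hermitian pairing so that $\langle A\varphi,\varphi\rangle_b=\|A^{1/2}\text{-type expressions}\|^2$-style identities hold. This is standard in the transverse Hodge theory already invoked from \cite{B-G, E-K1, E-K2}, so I would simply cite the transverse Hermitian inner product and proceed as above.
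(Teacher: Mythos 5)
Your proposal is correct and follows essentially the same route as the paper: the authors also pair $\Delta^b_{BC}\varphi$ with $\varphi$, use the adjoint relations to rewrite $\langle\Delta^b_{BC}\varphi,\varphi\rangle_b$ as $\|(\partial_b\overline{\partial}_b)^*\varphi\|^2+\|\partial_b\varphi\|^2+\|\overline{\partial}_b\varphi\|^2$, and conclude from positivity. Your explicit remark about the converse direction and about the Hermitian extension of $\langle\,,\,\rangle_b$ to $(r,s)$-forms only makes explicit what the paper leaves implicit.
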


We denote by $\mathcal{H}^{r,s}_{b,BC}(M)$ the space of $\Delta^b_{BC}$--harmonic basic forms of type $(r,s)$ on $M$.

Following the same ideas from \cite{T-Y1}, we now show that $H^{*,*}_{b,BC}(M)$ is finite dimensional by analyzing the space of its harmonic basic forms. Firstly, let us consider a related fourth-order differential operator which is elliptic (see \cite{E-K2} for general transversally Hermitian foliations), namely
\begin{equation}
\widetilde{\Delta}^b_{BC}=\partial_b\overline{\partial}_b\overline{\partial}_b^\ast\partial_b^\ast+\overline{\partial}_b^\ast\partial_b^\ast\partial_b\overline{\partial}_b+\overline{\partial}_b^\ast\partial_b\partial_b^\ast\overline{\partial}_b+\partial_b^\ast\overline{\partial}_b\overline{\partial}_b^\ast\partial_b+\overline{\partial}_b^\ast\overline{\partial}_b+\partial_b^\ast\partial_b.
\label{III5}
\end{equation} 
This operator has the same kernel as $\Delta^b_{BC}$. Indeed
\begin{displaymath}
0=\langle\varphi,\widetilde{\Delta}^b_{BC}\varphi\rangle_b=||\partial_b\varphi||^2+||\overline{\partial}_b\varphi||^2+||(\partial_b\overline{\partial}_b)^*\varphi||^2+||\partial_b\overline{\partial}_b\varphi||^2+||\partial_b^*\overline{\partial}_b\varphi||^2+||\overline{\partial}_b^*\partial_b\varphi||^2
\end{displaymath}
and the three additional terms clearly do not give any additional conditions and are automatically zero by requirement $\partial_b\varphi=\overline{\partial}_b\varphi=0$. Essentially, the presence of the second-order differential terms ensures that the spaces $\ker\Delta^b_{BC}$ and $\ker\widetilde{\Delta}^b_{BC}$ coincides. Using the classical Hodge identities for Sasakian manifolds, (see Lemma 7.2.7 from \cite{B-G}), in relation \eqref{III5}, we also obtain:
\begin{proposition}
\label{p2.3}
If $M$ is a compact Sasakian manifold of dimension $2n+1$, then
\begin{displaymath}
\widetilde{\Delta}^b_{BC}=\Delta_{\overline{\partial}_b}\Delta_{\overline{\partial}_b}+\partial_b^*\partial_b+\overline{\partial}_b^*\overline{\partial}_b.
\end{displaymath}
Moreover, the harmonic spaces $\mathcal{H}_{b}^{r+s}(M)\cap F_b^{r,s}(M)$, $\mathcal{H}^{r,s}_b(M)$ and $\mathcal{H}_{b,BC}^{r,s}(M)$ coincides, and also $d\eta$ is harmonic basic $(1,1)$--form with respect to every Laplacian: $\Delta_b$, $\Delta_{\overline{\partial}_b}$ and $\Delta_{BC}^b$, respectively.
\end{proposition}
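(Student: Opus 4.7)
The plan is to proceed in three steps: first establish the operator identity for $\widetilde{\Delta}_{BC}^{b}$, then use it to show the three harmonic spaces coincide, and finally exhibit $d\eta$ as a harmonic form for all three Laplacians.

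For the operator identity, my approach is to expand $\Delta_{\overline{\partial}_b}^{2}=(\overline{\partial}_b\overline{\partial}_b^{*}+\overline{\partial}_b^{*}\overline{\partial}_b)^{2}$ and to compare it term by term with the fourth-order part of $\widetilde{\Delta}_{BC}^{b}$. A direct expansion using only $\overline{\partial}_b^{2}=\overline{\partial}_b^{*2}=0$ from \eqref{e383}--\eqref{e384} collapses two diagonal squares but leaves two monomials purely in $\overline{\partial}_b,\overline{\partial}_b^{*}$ which do not yet match the four mixed quartic monomials appearing in the definition of $\widetilde{\Delta}_{BC}^{b}$. To bridge this gap I plan to invoke $\Delta_{\overline{\partial}_b}=\Delta_{\partial_b}$ from Lemma \ref{eqlap}, rewrite $\Delta_{\overline{\partial}_b}^{2}=\Delta_{\overline{\partial}_b}\Delta_{\partial_b}$, and expand the latter into four cross terms of the mixed form $\overline{\partial}_b^{(*)}\overline{\partial}_b^{(*)}\partial_b^{(*)}\partial_b^{(*)}$. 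Each of these four cross terms should then be slid into one of the four quartic monomials in $\widetilde{\Delta}_{BC}^{b}$ by repeated use of the Sasakian K\"{a}hler identities in Lemma 7.2.7 of \cite{B-G}, especially the anticommutations $\{\partial_b,\overline{\partial}_b\}=\{\partial_b,\overline{\partial}_b^{*}\}=\{\partial_b^{*},\overline{\partial}_b\}=0$ and their adjoints. Adding back the two second-order pieces $\partial_b^{*}\partial_b+\overline{\partial}_b^{*}\overline{\partial}_b$ then completes the identity.

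For the coincidence of the three harmonic spaces, the equality $\mathcal{H}_b^{r+s}(M)\cap F_b^{r,s}(M)=\mathcal{H}_b^{r,s}(M)$ follows immediately from $\Delta_b=2\Delta_{\overline{\partial}_b}$ and the fact that $\Delta_{\overline{\partial}_b}$ preserves bidegree (Lemma \ref{eqlap}). For the inclusion $\mathcal{H}_b^{r,s}(M)\subseteq\mathcal{H}_{b,BC}^{r,s}(M)$, I would argue that if $\Delta_{\overline{\partial}_b}\varphi=0$ then compactness gives $\overline{\partial}_b\varphi=\overline{\partial}_b^{*}\varphi=0$, and Lemma \ref{eqlap} also forces $\partial_b\varphi=\partial_b^{*}\varphi=0$; in particular $(\partial_b\overline{\partial}_b)^{*}\varphi=\overline{\partial}_b^{*}\partial_b^{*}\varphi=0$, so by the preceding proposition $\Delta_{BC}^{b}\varphi=0$. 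For the reverse inclusion I would use Step 1 essentially: since $\ker\Delta_{BC}^{b}=\ker\widetilde{\Delta}_{BC}^{b}$ (as already noted in the paper), pairing $\widetilde{\Delta}_{BC}^{b}\varphi=0$ with $\varphi$ via the identity from Step 1 and the self-adjointness of $\Delta_{\overline{\partial}_b}$ yields
\begin{displaymath}
0=\langle\widetilde{\Delta}_{BC}^{b}\varphi,\varphi\rangle_{b}=\|\Delta_{\overline{\partial}_b}\varphi\|^{2}+\|\partial_b\varphi\|^{2}+\|\overline{\partial}_b\varphi\|^{2},
\end{displaymath}
which forces $\Delta_{\overline{\partial}_b}\varphi=0$. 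Step 3 is then immediate: writing $d\eta=L(1)$ with $1$ the (trivially $\Delta_b$-harmonic) constant function, the commutation \eqref{C5} gives $\Delta_b(d\eta)=L(\Delta_b 1)=0$, Lemma \ref{eqlap} upgrades this to $\Delta_{\overline{\partial}_b}(d\eta)=0$, and Step 2 yields $\Delta_{BC}^{b}(d\eta)=0$.

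The main obstacle is clearly Step 1. Matching the four quartic monomials of $\widetilde{\Delta}_{BC}^{b}$ with those produced by expanding $\Delta_{\overline{\partial}_b}\Delta_{\partial_b}$ requires several nested applications of the anticommutation relations and is sign-sensitive; it is also the only place in the argument where the Sasakian K\"{a}hler identities enter in an essential way. The remainder of the proof is a sequence of short formal deductions from Step 1 and from the basic Hodge theory already developed in the paper.
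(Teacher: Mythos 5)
Your proposal is correct and follows essentially the route the paper itself intends: the paper merely asserts the identity by citing the transverse K\"{a}hler identities applied to \eqref{III5}, and your Step 1 (rewriting $\Delta_{\overline{\partial}_b}^{2}$ as $\Delta_{\overline{\partial}_b}\Delta_{\partial_b}$ and matching the four cross terms against the quartic monomials via the anticommutation relations) is exactly the computation being invoked. The remaining steps --- the coincidence of the harmonic spaces via $\Delta_b=2\Delta_{\overline{\partial}_b}$ and the pairing $\langle\widetilde{\Delta}^b_{BC}\varphi,\varphi\rangle_b=\|\Delta_{\overline{\partial}_b}\varphi\|^2+\|\partial_b\varphi\|^2+\|\overline{\partial}_b\varphi\|^2$, and the harmonicity of $d\eta=L(1)$ via \eqref{C5} --- are all sound and consistent with the paper's framework.
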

We have now
\begin{theorem}
\label{thmain}
Let $M$ be a compact Sasakian mnaifold of dimension $2n+1$.Then
\begin{enumerate}
\item[{\rm (i)}] $\dim \mathcal{H}^{r,s}_{b,BC}(M)<\infty$;
\item[{\rm (ii)}] There is an orthogonal decomposition
\begin{equation}
F_b^{r,s}(M)=\mathcal{H}^{r,s}_{b,BC}(M)\oplus {\rm Im}(\partial_b\overline{\partial}_b)\oplus({\rm Im}\,\partial_b^*+{\rm Im}\,\overline{\partial}_b^*);
\label{x1}
\end{equation}
\item[{\rm (iii)}] There are the canonical isomorphisms: 
\begin{displaymath}
\mathcal{H}^{r,s}_{b,BC}(M)\cong H^{r,s}_{b,BC}(M)\cong H^{r,s}_{b}(M).
\end{displaymath}
\end{enumerate}
\end{theorem}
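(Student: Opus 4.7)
The plan is to run a transverse analogue of the Schweitzer / Kodaira--Spencer Hodge theory for Bott--Chern cohomology, leveraging Proposition \ref{p2.3} to identify the various harmonic spaces. The three assertions will be established in the order (i), (ii), (iii).

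First, I will invoke the fact (referenced from \cite{E-K2}) that $\widetilde{\Delta}^{b}_{BC}$ is a fourth--order transversally elliptic self--adjoint operator on the space of basic $(r,s)$--forms of a compact Riemannian foliation. The general Hodge theory for such operators on compact manifolds yields the $L^{2}$--orthogonal splitting
\begin{equation*}
F^{r,s}_{b}(M)=\ker\widetilde{\Delta}^{b}_{BC}\oplus\operatorname{Im}\widetilde{\Delta}^{b}_{BC},
\end{equation*}
with $\ker\widetilde{\Delta}^{b}_{BC}$ finite dimensional and $\operatorname{Im}\widetilde{\Delta}^{b}_{BC}$ closed. Pairing $\widetilde{\Delta}^{b}_{BC}\varphi$ with $\varphi$ and using Proposition \ref{p2.3}, one gets $\|\Delta_{\overline{\partial}_{b}}\varphi\|^{2}+\|\partial_{b}\varphi\|^{2}+\|\overline{\partial}_{b}\varphi\|^{2}=0$, so any $\varphi\in\ker\widetilde{\Delta}^{b}_{BC}$ satisfies $\partial_{b}\varphi=\overline{\partial}_{b}\varphi=0$ and $\Delta_{\overline{\partial}_{b}}\varphi=0$; hence $\overline{\partial}_{b}^{*}\varphi=0$ and consequently $(\partial_{b}\overline{\partial}_{b})^{*}\varphi=0$, showing $\ker\widetilde{\Delta}^{b}_{BC}=\ker\Delta^{b}_{BC}=\mathcal{H}^{r,s}_{b,BC}(M)$. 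This settles (i).

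For (ii), I will first verify that the three summands are pairwise orthogonal by direct integration by parts: $\langle\varphi,\partial_{b}\overline{\partial}_{b}\alpha\rangle_{b}=\langle(\partial_{b}\overline{\partial}_{b})^{*}\varphi,\alpha\rangle_{b}=0$ and $\langle\varphi,\partial_{b}^{*}\beta\rangle_{b}=\langle\partial_{b}\varphi,\beta\rangle_{b}=0$ (similarly for $\overline{\partial}_{b}^{*}$) whenever $\varphi\in\mathcal{H}^{r,s}_{b,BC}(M)$, while $\langle\partial_{b}\overline{\partial}_{b}\alpha,\partial_{b}^{*}\beta\rangle_{b}=\langle\partial_{b}^{2}\overline{\partial}_{b}\alpha,\beta\rangle_{b}=0$ using $\partial_{b}^{2}=\overline{\partial}_{b}^{2}=0$. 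Inspection of the six summands of $\widetilde{\Delta}^{b}_{BC}$ in \eqref{III5} shows immediately that $\operatorname{Im}\widetilde{\Delta}^{b}_{BC}\subseteq\operatorname{Im}(\partial_{b}\overline{\partial}_{b})+\operatorname{Im}\partial_{b}^{*}+\operatorname{Im}\overline{\partial}_{b}^{*}$. Conversely, the orthogonality computations just performed prove that every element of this sum is orthogonal to $\mathcal{H}^{r,s}_{b,BC}(M)$, hence lies in $\operatorname{Im}\widetilde{\Delta}^{b}_{BC}$ by the Hodge splitting of Step 1. Combining these two inclusions with the pairwise orthogonality yields the decomposition \eqref{x1}.

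For (iii), the isomorphism $\mathcal{H}^{r,s}_{b,BC}(M)\cong H^{r,s}_{b}(M)$ is precisely the content of the harmonic--space coincidence asserted in Proposition \ref{p2.3}, combined with the basic Dolbeault Hodge isomorphism \cite{B-G}. For $\mathcal{H}^{r,s}_{b,BC}(M)\cong H^{r,s}_{b,BC}(M)$, I will use the natural map $\varphi\mapsto[\varphi]_{BC}$. Injectivity: if $\varphi\in\mathcal{H}^{r,s}_{b,BC}(M)$ and $\varphi=\partial_{b}\overline{\partial}_{b}\alpha$, then $\|\varphi\|^{2}=\langle(\partial_{b}\overline{\partial}_{b})^{*}\varphi,\alpha\rangle_{b}=0$. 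Surjectivity: given $\psi\in F^{r,s}_{b}(M)$ with $\partial_{b}\psi=\overline{\partial}_{b}\psi=0$, decompose via \eqref{x1} as $\psi=\varphi_{h}+\partial_{b}\overline{\partial}_{b}\alpha+\rho$ with $\rho=\partial_{b}^{*}\beta+\overline{\partial}_{b}^{*}\gamma$; then $\rho=\psi-\varphi_{h}-\partial_{b}\overline{\partial}_{b}\alpha$ is itself $\partial_{b}$-- and $\overline{\partial}_{b}$--closed, so $\|\rho\|^{2}=\langle\rho,\partial_{b}^{*}\beta\rangle_{b}+\langle\rho,\overline{\partial}_{b}^{*}\gamma\rangle_{b}=\langle\partial_{b}\rho,\beta\rangle_{b}+\langle\overline{\partial}_{b}\rho,\gamma\rangle_{b}=0$. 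Thus $[\psi]_{BC}=[\varphi_{h}]_{BC}$, and the map is onto.

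\textbf{Main obstacle.} The substantive input is the transverse ellipticity of $\widetilde{\Delta}^{b}_{BC}$ and the associated basic Hodge splitting on compact Sasakian (hence transversally K\"ahlerian) manifolds, which is essentially quoted from \cite{E-K2,B-G}; the remaining algebraic manipulations are routine once Proposition \ref{p2.3} is in hand. The most delicate bookkeeping is the surjectivity argument in (iii), where the Sasakian Hodge identities of Proposition \ref{p2.3} are used implicitly to ensure that $\varphi_{h}$ is simultaneously $\partial_{b}$-- and $\overline{\partial}_{b}$--closed, a prerequisite for the vanishing of $\rho$.
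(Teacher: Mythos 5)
Your proposal is correct and follows essentially the same route as the paper: ellipticity of the fourth-order operator $\widetilde{\Delta}^b_{BC}$ together with the identification $\ker\widetilde{\Delta}^b_{BC}=\ker\Delta^b_{BC}$ (via Proposition \ref{p2.3}) gives (i) and the elliptic Hodge splitting (ii), and (iii) is then deduced exactly as in the paper by showing that the $(\mathrm{Im}\,\partial_b^*+\mathrm{Im}\,\overline{\partial}_b^*)$--component of a $\partial_b$-- and $\overline{\partial}_b$--closed form vanishes by an integration-by-parts norm computation. The only cosmetic difference is that you quote transverse ellipticity from \cite{E-K2} where the paper briefly computes the principal symbol, and you spell out the orthogonality checks that the paper compresses into ``follows by elliptic theory.''
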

\begin{proof}
(i) Because only the highest order differential need to be kept for computing  the principal symbol of a Laplace operator, by the calculations of $\widetilde{\Delta}^b_{BC}$ from Proposition \ref{p2.3}, it follows that the principal symbol of $\widetilde{\Delta}^b_{BC}$ is equal to that of the square of the operator $\Delta_{\overline{\partial}_b}$, so it is positive. Thus $\widetilde{\Delta}^b_{BC}$ is  elliptic and hence its kernel,  $\mathcal{H}^{r,s}_{b,BC}(M)$, is finite dimensional.

With $\widetilde{\Delta}^b_{BC}$ elliptic, assertion (ii) then follows directly by applying elliptic theory. For (iii), using the decomposition of (ii), we have
\begin{equation}
\ker(\partial_b+\overline{\partial}_b)=\mathcal{H}^{r,s}_{b,BC}(M)\oplus{\rm Im}(\partial_b\overline{\partial}_b).
\label{x2}
\end{equation}
This must be so since for a form $\varphi\in F_b^{r,s}(M)$ given by $\varphi=\psi+\partial_b\overline{\partial}_b\theta+\partial_b^*\theta_1+\overline{\partial}_b^*\theta_2$, where $\psi\in \mathcal{H}^{r,s}_{b,BC}(M)$, we have $\partial_b\varphi=\overline{\partial}_b\varphi=0$ if and only if
\begin{eqnarray*}
0&=& \langle\theta_1,\partial_b(\partial_b^*\theta_1+\overline{\partial}_b^*\theta_2)\rangle_b+\langle\theta_2,\overline{\partial}_b(\partial_b^*\theta_1+\overline{\partial}_b^*\theta_2)\rangle_b \\
&=& \langle\partial_b^*\theta_1+\overline{\partial}_b^*\theta_2,\partial_b^*\theta_1+\overline{\partial}_b^*\theta_2\rangle_b\\
&=& ||\partial_b^*\theta_1+\overline{\partial}_b^*\theta_2||^2
\end{eqnarray*}
which imply $\partial_b^*\theta_1+\overline{\partial}_b^*\theta_2=0$, i.e. desired decomposition from ({\ref{x2}}). Thus every cohomology class of $H^{\bullet,\bullet}_{b,BC}(M)$ contains a unique harmonic representative and $\mathcal{H}^{r,s}_{b,BC}(M)$ $\cong H^{r,s}_{b,BC}(M)$, i.e. the first isomorphism of (iii). Since $\ker\widetilde{\Delta}^b_{BC}=\ker\Delta_{\overline{\partial}_b}$, the second isomorphism of (iii) it follows by $H^{r,s}_b(M)\cong \mathcal{H}^{r,s}_b(M)$ and $\mathcal{H}_b^{r,s}(M)=\mathcal{H}^{r,s}_{b,BC}(M)$.
\end{proof}
\begin{corollary}
If $M$ is a compact Sasakian manifold of dimension $2n+1$, then $H^{r,s}_{b, BC}(M)$ is finite dimensional.
\end{corollary}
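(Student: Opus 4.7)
The plan is to obtain this finite-dimensionality as an immediate consequence of Theorem \ref{thmain}, without invoking any new machinery. Indeed, the real work has already been done in establishing the ellipticity of $\widetilde{\Delta}^b_{BC}$ and identifying its kernel with the basic Bott--Chern harmonic space.

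First I would invoke part (iii) of Theorem \ref{thmain}, which provides the canonical isomorphism
\begin{displaymath}
H^{r,s}_{b,BC}(M)\cong \mathcal{H}^{r,s}_{b,BC}(M).
\end{displaymath}
This isomorphism is the key: it reduces the question of finite-dimensionality of a cohomology group (defined as a quotient of infinite-dimensional spaces of forms) to the finite-dimensionality of a kernel of an elliptic operator.

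Then I would apply part (i) of the same theorem, which states $\dim \mathcal{H}^{r,s}_{b,BC}(M)<\infty$. This was established via the identification $\ker \Delta^b_{BC}=\ker \widetilde{\Delta}^b_{BC}$ together with the fact that $\widetilde{\Delta}^b_{BC}$ is a transversally elliptic fourth-order operator whose principal symbol agrees with that of $(\Delta_{\overline{\partial}_b})^2$ by Proposition \ref{p2.3}; standard elliptic theory for basic forms on the Reeb foliation (as in \cite{E-K2}) then forces its kernel to be finite-dimensional.

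Combining these two facts yields $\dim H^{r,s}_{b,BC}(M)=\dim \mathcal{H}^{r,s}_{b,BC}(M)<\infty$, which is the claim. There is no real obstacle here: the corollary is simply a packaging of information already contained in the theorem. The only thing one might want to note explicitly is that the bound also follows, alternatively, from the chain of isomorphisms in (iii) that identifies $H^{r,s}_{b,BC}(M)$ with $H^{r,s}_b(M)$, which is itself known to be finite-dimensional for a compact $K$--contact (in particular Sasakian) manifold via the transverse Hodge theorem \eqref{harmoniciso}; either route closes the argument in one line.
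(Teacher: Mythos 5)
Your argument is correct and is exactly how the paper intends the corollary to be read: it follows immediately from Theorem \ref{thmain}, combining the isomorphism $H^{r,s}_{b,BC}(M)\cong \mathcal{H}^{r,s}_{b,BC}(M)$ of part (iii) with the finite-dimensionality of $\mathcal{H}^{r,s}_{b,BC}(M)$ from part (i), which rests on the ellipticity of $\widetilde{\Delta}^b_{BC}$. The paper gives no separate proof precisely because the corollary is this one-line consequence, so your proposal matches its approach.
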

Now, let us define the \textit{Aeppli Laplacian} for basic forms of type $(r,s)$ on $M$ by
\begin{equation}
\Delta^b_{A}+\partial_b\partial^*_b+\overline{\partial}_b\overline{\partial}_b^*+(\partial_b\overline{\partial}_b)^*\partial_b\overline{\partial}_b
\label{III7}
\end{equation}
which is not elliptic, but if we change it by 
\begin{equation}
\widetilde{\Delta}^b_{A}=\partial_b\partial_b^*+\overline{\partial}_b\overline{\partial}_b^*+\overline{\partial}_b^*\partial_b^*\partial_b\overline{\partial}_b+\partial_b\overline{\partial}_b\overline{\partial}_b^*\partial_b^*+\partial_b\overline{\partial}_b^*\overline{\partial}_b\partial_b^*+\overline{\partial}_b\partial_b^*\partial_b\overline{\partial}_b^*
\label{III8}
\end{equation}
this is elliptic.

Now, if we denote $\mathcal{H}^{r,s}_{b,A}(M)=\ker\widetilde{\Delta}^b_A\cap F_b^{r,s}(M)$, then by applying ellliptic theory arguments, similar to Theorem \ref{thmain}, we have
\begin{theorem}
\label{thmainA}
Let $M$ be a compact Sasakian manifold of dimension $2n+1$. Then
\begin{enumerate}
\item[{\rm (i)}] $\dim \mathcal{H}^{r,s}_{b,A}(M)<\infty$;
\item[{\rm (ii)}] There is an orthogonal decomposition
\begin{equation}
F^{r,s}_{b}(M)=\mathcal{H}^{r,s}_{b,A}(M)\oplus ({\rm Im}\partial_b+{\rm Im}\overline{\partial}_b)\oplus{\rm Im}(\overline{\partial}_b^*\partial_b^*);
\label{x4}
\end{equation}
\item[{\rm (iii)}] There is a canonical isomorphism: 
\begin{displaymath}
\mathcal{H}^{r,s}_{b,A}(M)\cong H^{r,s}_{b,A}(M).
\end{displaymath}
\end{enumerate}
\end{theorem}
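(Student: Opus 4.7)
The plan is to mirror the three-step proof of Theorem \ref{thmain}, replacing $\widetilde{\Delta}^b_{BC}$ with its Aeppli counterpart $\widetilde{\Delta}^b_A$ throughout. The only genuinely new input is the ellipticity of $\widetilde{\Delta}^b_A$ and the identification of its kernel with that of $\Delta^b_A$; once these are in place, the argument proceeds formally.

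First I would establish ellipticity of $\widetilde{\Delta}^b_A$. Using the basic Hodge identities on a Sasakian (hence transversally K\"ahlerian) manifold, see Lemma 7.2.7 of \cite{B-G} and \cite{E-K2}, one rewrites the fourth-order part of \eqref{III8} in a form analogous to Proposition \ref{p2.3}: modulo the first-order pieces $\partial_b\partial_b^\ast$ and $\overline{\partial}_b\overline{\partial}_b^\ast$, it equals $\Delta_{\overline{\partial}_b}\Delta_{\overline{\partial}_b}$. The principal symbol then coincides with that of $\Delta_{\overline{\partial}_b}^2$, which is positive, so $\widetilde{\Delta}^b_A$ is transversally elliptic. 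Standard transverse elliptic theory yields (i).

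Next, pairing $\langle\widetilde{\Delta}^b_A\varphi,\varphi\rangle_b$ and integrating by parts expresses it as a sum of six squared norms,
\[
\|\partial_b^\ast\varphi\|^2+\|\overline{\partial}_b^\ast\varphi\|^2+\|\partial_b\overline{\partial}_b\varphi\|^2+\|\overline{\partial}_b^\ast\partial_b^\ast\varphi\|^2+\|\overline{\partial}_b\partial_b^\ast\varphi\|^2+\|\partial_b\overline{\partial}_b^\ast\varphi\|^2,
\]
of which the last three vanish automatically once the first three do. Consequently $\ker\widetilde{\Delta}^b_A=\ker\Delta^b_A$ consists precisely of the basic forms annihilated by $\partial_b^\ast$, $\overline{\partial}_b^\ast$ and $\partial_b\overline{\partial}_b$. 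Ellipticity then gives the abstract splitting $F_b^{r,s}(M)=\mathcal{H}^{r,s}_{b,A}(M)\oplus{\rm Im}\,\widetilde{\Delta}^b_A$, and regrouping the six summands of \eqref{III8} by their outermost operator collapses the image into $({\rm Im}\,\partial_b+{\rm Im}\,\overline{\partial}_b)+{\rm Im}(\overline{\partial}_b^\ast\partial_b^\ast)$. Pairwise orthogonality of these three pieces follows from $\partial_b^2=\overline{\partial}_b^2=\partial_b\overline{\partial}_b+\overline{\partial}_b\partial_b=0$ together with adjunction: for example, $\langle\partial_b\alpha,\overline{\partial}_b^\ast\partial_b^\ast\gamma\rangle_b=\langle\partial_b\overline{\partial}_b\partial_b\alpha,\gamma\rangle_b=0$, and any $\psi\in\mathcal{H}^{r,s}_{b,A}(M)$ is orthogonal to both ${\rm Im}\,\partial_b+{\rm Im}\,\overline{\partial}_b$ and ${\rm Im}(\overline{\partial}_b^\ast\partial_b^\ast)$ by the kernel characterisation. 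This proves (ii).

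Finally, for (iii) I would use (ii) exactly as in the third step of Theorem \ref{thmain}. Writing $\varphi=\psi+\partial_b\alpha+\overline{\partial}_b\beta+\overline{\partial}_b^\ast\partial_b^\ast\gamma$ with $\psi\in\mathcal{H}^{r,s}_{b,A}(M)$, the condition $\partial_b\overline{\partial}_b\varphi=0$ collapses (using $\partial_b^2=\overline{\partial}_b^2=0$ and harmonicity of $\psi$) to $\partial_b\overline{\partial}_b(\overline{\partial}_b^\ast\partial_b^\ast\gamma)=0$; pairing with $\gamma$ then yields $\|\overline{\partial}_b^\ast\partial_b^\ast\gamma\|^2=0$, hence $\overline{\partial}_b^\ast\partial_b^\ast\gamma=0$. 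Therefore
\[
\ker(\partial_b\overline{\partial}_b)=\mathcal{H}^{r,s}_{b,A}(M)\oplus({\rm Im}\,\partial_b+{\rm Im}\,\overline{\partial}_b),
\]
producing a unique harmonic representative in each class of $H^{r,s}_{b,A}(M)$. The main obstacle is thus the ellipticity and kernel analysis of $\widetilde{\Delta}^b_A$; both rely crucially on the transverse K\"ahler Hodge identities available precisely because $M$ is Sasakian.
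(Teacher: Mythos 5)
Your proposal is correct and follows essentially the same route the paper intends: the paper gives no written proof for this theorem, simply invoking ``elliptic theory arguments, similar to Theorem \ref{thmain}'', and your argument is exactly that dualization --- ellipticity of $\widetilde{\Delta}^b_A$ via the transverse K\"ahler identities, the kernel characterisation $\ker\widetilde{\Delta}^b_A=\ker\Delta^b_A=\ker\partial_b^*\cap\ker\overline{\partial}_b^*\cap\ker(\partial_b\overline{\partial}_b)$, the resulting orthogonal splitting, and the identification $\ker(\partial_b\overline{\partial}_b)=\mathcal{H}^{r,s}_{b,A}(M)\oplus({\rm Im}\,\partial_b+{\rm Im}\,\overline{\partial}_b)$ giving (iii). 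The only blemish is calling $\partial_b\partial_b^*$ and $\overline{\partial}_b\overline{\partial}_b^*$ ``first-order pieces'' (they are second-order), which does not affect the principal-symbol argument.
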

\begin{corollary}
If $M$ is a compact Sasakian manifold, then $H^{r,s}_{b, A}(M)$ is finite dimensional.
\end{corollary}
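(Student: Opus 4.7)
The Corollary is an immediate packaging of parts (i) and (iii) of Theorem \ref{thmainA}, so my plan is simply to verify that the cited theorem does the work, and to sketch how its proof goes through in case one wants to be self-contained. Concretely: by part (iii) of Theorem \ref{thmainA} one has the canonical isomorphism $\mathcal{H}^{r,s}_{b,A}(M)\cong H^{r,s}_{b,A}(M)$, and by part (i) the left-hand side is finite dimensional; composing these two facts yields the corollary. So once Theorem \ref{thmainA} is in hand, no further argument is required.

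To make the statement self-contained, my plan for the underlying Theorem \ref{thmainA} mirrors exactly the treatment of the Bott--Chern case (Theorem \ref{thmain}). First I would verify that $\widetilde{\Delta}^b_A$ defined in \eqref{III8} has the same kernel as $\Delta^b_A$. Expanding $\langle \varphi,\widetilde{\Delta}^b_A\varphi\rangle_b$ using self-adjointness of $\partial_b^*,\overline{\partial}_b^*$ gives a sum of squared norms
\[
\|\partial_b^*\varphi\|^2+\|\overline{\partial}_b^*\varphi\|^2+\|\partial_b\overline{\partial}_b\varphi\|^2+\|\partial_b^*\overline{\partial}_b^*\varphi\|^2+\|\overline{\partial}_b\partial_b^*\varphi\|^2+\|\partial_b\overline{\partial}_b^*\varphi\|^2,
\]
so $\widetilde{\Delta}^b_A\varphi=0$ forces $\partial_b^*\varphi=\overline{\partial}_b^*\varphi=(\partial_b\overline{\partial}_b)\varphi=0$, and the remaining three terms become automatic, exactly as for $\widetilde{\Delta}^b_{BC}$ in Theorem \ref{thmain}. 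Second, I would check ellipticity: only the highest-order piece determines the principal symbol, and using the transverse Kähler/Hodge identities of Lemma \ref{eqlap} (as recorded in Proposition \ref{p2.3} for the $BC$ case) this principal symbol agrees with that of $\Delta_{\overline{\partial}_b}^2$, which is positive. Finite dimensionality of $\mathcal{H}^{r,s}_{b,A}(M)$ then follows from standard transverse elliptic theory for Riemannian foliations (El Kacimi-Alaoui).

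For the orthogonal decomposition (ii) and the isomorphism in (iii) I would apply Hodge theory for the elliptic operator $\widetilde{\Delta}^b_A$ on the space of basic forms: $F_b^{r,s}(M)=\mathcal{H}^{r,s}_{b,A}(M)\oplus \mathrm{Im}\,\widetilde{\Delta}^b_A$, and a repeated application of the six summands of $\widetilde{\Delta}^b_A$ together with the splitting argument used in Theorem \ref{thmain}(ii) reorganizes the image as $(\mathrm{Im}\,\partial_b+\mathrm{Im}\,\overline{\partial}_b)\oplus\mathrm{Im}(\overline{\partial}_b^*\partial_b^*)$. To deduce (iii), one shows $\ker(\partial_b\overline{\partial}_b)=\mathcal{H}^{r,s}_{b,A}(M)\oplus(\mathrm{Im}\,\partial_b+\mathrm{Im}\,\overline{\partial}_b)$ by testing an arbitrary element $\varphi=\psi+\partial_b\alpha+\overline{\partial}_b\beta+\overline{\partial}_b^*\partial_b^*\theta$ against $\partial_b\overline{\partial}_b\varphi=0$, which forces $\overline{\partial}_b^*\partial_b^*\theta=0$ by an inner-product argument identical to the one used for $\ker(\partial_b+\overline{\partial}_b)$ in the proof of Theorem \ref{thmain}.

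The main obstacle is the verification of ellipticity of $\widetilde{\Delta}^b_A$ in the transverse (basic) setting, since the operator is fourth order and involves a cancellation between mixed terms; here one really needs the basic Kähler identities on a Sasakian manifold, supplied by Lemma 7.2.7 of \cite{B-G} via Proposition \ref{p2.3}. Once this is established, the transverse elliptic package and the purely formal Hodge-type manipulations above produce (i)–(iii), and the Corollary on finite dimensionality of $H^{r,s}_{b,A}(M)$ drops out as a direct consequence.
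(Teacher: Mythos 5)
Your proposal is correct and follows exactly the route the paper intends: the corollary is an immediate consequence of Theorem \ref{thmainA}(i) and (iii), and the paper itself only justifies that theorem by saying it follows ``by applying elliptic theory arguments, similar to Theorem \ref{thmain}'', which is precisely the ellipticity-of-$\widetilde{\Delta}^b_A$ and Hodge-decomposition argument you sketch. No discrepancies to report.
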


Finally, let us remark that  $\star_b$ gives an isomorphism $H^{r,s}_{b,BC}(M)\approx H^{n-r,n-s}_{b,A}(M)$.

\subsection{Coeffective Bott-Chern cohomology for basic forms}

In this subsection we define and study a  coeffective Bott-Chern cohomology for basic forms on Sasakian manifolds.

Since the operator $L$ commutes with both operators $\partial_b$ and $\overline{\partial}_b$, we can consider the subcomplex of Bott-Chern complex of $M$
\begin{equation}
\label{C1}
\ldots\longrightarrow\mathcal{A}_b^{r-1,s-1}(M)\stackrel{\partial_b\overline{\partial}_b}{\longrightarrow}\mathcal{A}_b^{r,s}(M)\stackrel{\partial_b\oplus\overline{\partial}_b}{\longrightarrow}\mathcal{A}_b^{r+1,s}(M)\oplus\mathcal{A}_b^{r,s+1}(M)\longrightarrow\ldots
\end{equation}
for $1\leq r,s\leq n$; called the \textit{coeffective basic Bott-Chern complex} of $M$. The cohomology groups
of the complex \eqref{C1} are called \textit{coeffective basic Bott-Chern cohomology groups} of $M$ and they are denoted by $H^{r,s}_{BC}(\mathcal{A}_b(M))$. 

By Lemma \ref{surjD}, one gets
\begin{proposition}
Let $M$ be a regular Sasakian manifold of dimension $2n+1$. Then \begin{equation}
\label{C2}
H^{r,s}_{BC}(\mathcal{A}_b(M))=\{0\}\,,\,\,{\rm for}\,\,r+s\leq n-1.
\end{equation}
\end{proposition}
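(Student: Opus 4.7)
The plan is to run the exact same argument that was used earlier for the coeffective basic de Rham cohomology (Proposition giving \eqref{R2}) and for the coeffective basic Dolbeault cohomology (Proposition giving \eqref{D2}): the vanishing of the cohomology comes from the vanishing of the underlying cochain spaces in the range $r+s\leq n-1$.

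Concretely, I would first invoke Lemma \ref{surjD}, which states that the Lefschetz-type operator $L=\,\cdot\wedge d\eta:F_b^{r,s}(M)\rightarrow F_b^{r+1,s+1}(M)$ is injective whenever $r+s\leq n-1$. By definition,
\[
\mathcal{A}_b^{r,s}(M)=\ker L\big|_{F_b^{r,s}(M)},
\]
so injectivity of $L$ on this range immediately yields $\mathcal{A}_b^{r,s}(M)=\{0\}$ for every pair $(r,s)$ with $r+s\leq n-1$.

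Once this is established, the conclusion is automatic: the coeffective basic Bott--Chern group $H^{r,s}_{BC}(\mathcal{A}_b(M))$ is, by definition, a subquotient of $\mathcal{A}_b^{r,s}(M)$, namely
\[
H^{r,s}_{BC}(\mathcal{A}_b(M))=\frac{\ker\{\partial_b\oplus\overline{\partial}_b\}\cap \mathcal{A}_b^{r,s}(M)}{\mathrm{Im}\{\partial_b\overline{\partial}_b:\mathcal{A}_b^{r-1,s-1}(M)\rightarrow\mathcal{A}_b^{r,s}(M)\}},
\]
and a subquotient of the trivial space is trivial. This delivers $H^{r,s}_{BC}(\mathcal{A}_b(M))=\{0\}$ for $r+s\leq n-1$.

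There is essentially no obstacle: the content of the statement has already been transferred to Lemma \ref{surjD} via Theorem \ref{sym} and the corresponding fact on the K\"ahler orbit space $M_\xi$, so the proof is a one-line reduction. The only thing worth remarking is that it is the injectivity half of Lemma \ref{surjD} that is used here (as in the de Rham and Dolbeault cases), and that the argument requires no hypothesis beyond what is already used to define $\mathcal{A}_b^{r,s}(M)$, namely that $d\eta$ be a basic $(1,1)$-form on the regular Sasakian manifold $M$.
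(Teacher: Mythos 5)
Your proposal is correct and is essentially the paper's own argument: the paper derives \eqref{C2} directly from the injectivity half of Lemma \ref{surjD}, which forces $\mathcal{A}_b^{r,s}(M)=\ker L|_{F_b^{r,s}(M)}=\{0\}$ for $r+s\leq n-1$, so the coeffective basic Bott--Chern groups vanish as subquotients of trivial spaces. You have merely written out the one-line reduction that the paper leaves implicit.
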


Since $\partial_bd\eta=\overline{\partial}_bd\eta=0$, we have  that $[d\eta]_{BC}\in H^{1,1}_{BC}(M)$ and we consider the subspace of $H^{r,s}_{b,BC}(M)$ given by the basic Bott-Chern cohomology classes truncated by the basic Bott-Chern class $[d\eta]_{BC}$, namely,
\begin{equation}
\label{C3}
\widetilde{H}^{r,s}_{b,BC}(M)=\{a\in H^{r,s}_{b,BC}(M)\,|\,a\wedge[d\eta]_{BC}=0\}.
\end{equation}

Next we define the mapping $\alpha_{r,s}:H^{r,s}_{BC}(\mathcal{A}_b(M))\rightarrow\widetilde{H}^{r,s}_{b,BC}(M)$ by
\begin{equation}
\label{C4}
\alpha_{r,s}\left(\{\varphi\}_{BC}\right)=[\varphi]_{BC},
\end{equation}
where $\{\varphi\}_{BC}$ denotes the cohomology class of a coeffective basic form $\varphi$ in $H^{r,s}_{BC}(\mathcal{A}_b(M))$ and $[\varphi]_{BC}$ denotes the basic cohomology class of a basic form $\varphi$ in $H^{r,s}_{b,BC}(M)$.  This mapping permits us to give a relation between the coeffective basic Bott-Chern cohomology groups of the Sasakian manifold $M$ and the subspaces of the basic Bott-Chern cohomology groups given by \eqref{C3}, just in the case of  coeffective basic de Rham and Dolbeault cohomology of $M$. In the following, our aim is to find a link between  the  coeffective basic Bott-Chern cohomology groups  and the subspaces of the basic Bott-Chern cohomology groups given by \eqref{C3} for compact Sasakian manifolds and to prove a coeffective version of the basic Hodge decomposition theorem for basic Bott-Chern cohomology.
\begin{proposition}
\label{surjalpha}
If $M$ is a regular Sasakian manifold of dimension $2n+1$, the mapping $\alpha_{r,s}$ defined by \eqref{C4} is surjective for $r+s\geq n$.
\end{proposition}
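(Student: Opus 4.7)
My plan is to adapt the argument of Proposition \ref{alphadR} essentially verbatim to the Bott--Chern framework, with the only change being that exact differentials $d_b\gamma$ are replaced by $\partial_b\overline{\partial}_b$-exact forms $\partial_b\overline{\partial}_b\gamma$. The bigraded analogue of Lemma \ref{surj}, namely Lemma \ref{surjD}, supplies exactly the surjectivity needed for the critical step, and the fact that $L$ commutes with both $\partial_b$ and $\overline{\partial}_b$ (since $d\eta$ is $d_b$-closed and of pure bidegree $(1,1)$) guarantees that $L$ descends to Bott--Chern cohomology as required.

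Concretely, I would start from a class $a \in \widetilde{H}_{b,BC}^{r,s}(M)$, pick a Bott--Chern representative $\varphi$ so that $\partial_b\varphi = \overline{\partial}_b\varphi = 0$, and dispose of the trivial case $\varphi\in\mathcal{A}_b^{r,s}(M)$, in which $\{\varphi\}_{BC}$ itself is a preimage. Otherwise the vanishing of $a\wedge[d\eta]_{BC}$ in $H^{r+1,s+1}_{b,BC}(M)$ produces $\tau\in F_b^{r,s}(M)$ with $L\varphi = \partial_b\overline{\partial}_b\tau$. Using Lemma \ref{surjD}, I would then obtain $\gamma\in F_b^{r-1,s-1}(M)$ with $L\gamma = \tau$, and set $\varphi':=\varphi-\partial_b\overline{\partial}_b\gamma$. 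A direct check, based only on $\partial_b^2 = \overline{\partial}_b^2 = \partial_b\overline{\partial}_b+\overline{\partial}_b\partial_b = 0$ and on the commutation of $L$ with $\partial_b$ and $\overline{\partial}_b$, shows that $\varphi'$ is simultaneously $\partial_b$- and $\overline{\partial}_b$-closed and satisfies $L\varphi' = L\varphi - \partial_b\overline{\partial}_b(L\gamma) = \partial_b\overline{\partial}_b\tau - \partial_b\overline{\partial}_b\tau = 0$. Hence $\varphi'$ defines a coeffective basic Bott--Chern class with $\alpha_{r,s}(\{\varphi'\}_{BC}) = [\varphi']_{BC} = [\varphi]_{BC} = a$.

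The only real obstacle, exactly as in the de Rham case, is the surjectivity of $L$ at the correct bidegree: producing $\gamma$ with $L\gamma = \tau$ is the unique step where the bound on $r+s$ enters, and it is here that the hypothesis $r+s\geq n$ meets the range of applicability of Lemma \ref{surjD}, taking into account that $\tau$ lives in bidegree $(r,s)$ while the preimage $\gamma$ must live in bidegree $(r-1,s-1)$. Apart from this algebraic input, the argument is essentially formal and strictly parallels the proof of Proposition \ref{alphadR} and the Dolbeault counterpart of Proposition \ref{surjalphaD} (the Kähler analogue being Proposition 2.6 of \cite{I}). No finer analytic tool is required for surjectivity; in contrast, to upgrade $\alpha_{r,s}$ to an isomorphism for $r+s \neq n$ (a Bott--Chern analogue of Theorem \ref{main1dR}), one would further need the basic Hodge--Bott--Chern theory of Theorem \ref{thmain} combined with the surjectivity of $L$ on harmonic basic $(r,s)$-forms (Lemma \ref{surjharmonicD} together with Proposition \ref{p2.3}) to establish injectivity.
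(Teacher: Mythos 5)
Your argument is essentially identical to the paper's proof: choose a Bott--Chern representative $\varphi$ of $a$, dispose of the trivial case $\varphi\in\mathcal{A}_b^{r,s}(M)$, use $a\wedge[d\eta]_{BC}=0$ to write $L\varphi=\partial_b\overline{\partial}_b\sigma$ with $\sigma\in F_b^{r,s}(M)$, invoke Lemma \ref{surjD} to find $\gamma\in F_b^{r-1,s-1}(M)$ with $L\gamma=\sigma$, and pass to the corrected representative $\varphi-\partial_b\overline{\partial}_b\gamma$, which is coeffective and still $\partial_b$-- and $\overline{\partial}_b$--closed. One remark applying equally to your write-up and to the paper's: the step producing $\gamma$ uses surjectivity of $L:F_b^{r-1,s-1}(M)\rightarrow F_b^{r,s}(M)$, which Lemma \ref{surjD} guarantees only for $(r-1)+(s-1)\geq n-1$, i.e.\ $r+s\geq n+1$, so the borderline case $r+s=n$ is not literally covered by the cited lemma in either version.
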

\begin{proof}
Let $a\in\widetilde{H}^{r,s}_{b,BC}(M)$, that is, $a\in H^{r,s}_{b,BC}(M)$ and $a\wedge[d\eta]_{BC}=0$ in $H^{r+1,s+1}_{b,BC}(M)$. Consider a representative $\varphi$ of $a$ and suppose that $\varphi\notin\mathcal{A}_b^{r,s}(M)$ (notice that if $\varphi\in\mathcal{A}_b^{r,s}(M)$, then $\varphi$ defines a basic cohomology class in $H^{r,s}_{BC}(\mathcal{A}_b(M))$ such that $\alpha\left(\{\varphi\}_{BC}\right)=a$).

Since $a\wedge[d\eta]_{BC}=0$, then there exists $\sigma\in F_b^{r,s}(M)$ such that $\varphi\wedge d\eta=\partial_b\overline{\partial}_b\sigma$. Then, from Lemma \ref{surjD}, there exists $\gamma\in F_b^{r-1,s-1}(M)$ such that $L\gamma=\sigma$. Thus, $L(\varphi-\partial_b\overline{\partial}_b\gamma)=0$ and $\partial_b(\varphi-\partial_b\overline{\partial}_b\gamma)=\overline{\partial}_b(\varphi-\partial_b\overline{\partial}_b\gamma)=0$. Hence, $\varphi-\partial_b\overline{\partial}_b\gamma$ defines a basic cohomology class  in $H^{r,s}_{BC}(\mathcal{A}_b(M))$ such that $\alpha_{r,s}\left(\{\varphi-\partial_b\overline{\partial}_b\gamma\}_{BC}\right)=a$.
\end{proof}

Now, taking into account the relation \eqref{C5}, the classical Hodge identities for Sasakian manifolds and Proposition \ref{p2.3}, we have
\begin{equation}
\label{C6}
\widetilde{\Delta}_{BC}^bL-L\widetilde{\Delta}_{BC}^b=-2i\partial_b\overline{\partial}_b,
\end{equation}
so, if $\varphi\in\mathcal{H}^{r,s}_{b,BC}(M)$ then $L\varphi\in\mathcal{H}^{r+1,s+1}_{b,BC}(M)$. 

\begin{theorem}
\label{thmain1BC}
For a compact Sasakian manifold $M$ of  dimension $2n+1$, we have
\begin{equation}
\label{C7}
H^{r,s}_{BC}(\mathcal{A}_b(M))\cong \widetilde{H}^{r,s}_{b,BC}(M),
\end{equation}
for $r+s\notin\{ n,n+1\}$.
\end{theorem}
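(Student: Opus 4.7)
The plan mirrors the two-case structure used in the proofs of Theorem \ref{main1dR} and Theorem \ref{thmain1D}, now adapted to the Bott--Chern setting via the Hodge--Bott--Chern decomposition of Theorem \ref{thmain} and the Aeppli decomposition of Theorem \ref{thmainA}. I split into $r+s\leq n-1$ and $r+s\geq n+2$.

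For the low-degree case $r+s\leq n-1$, the coeffective side $H^{r,s}_{BC}(\mathcal{A}_b(M))$ already vanishes by the preceding vanishing proposition, so it suffices to prove $\widetilde{H}^{r,s}_{b,BC}(M)=\{0\}$. I pick a $\widetilde{\Delta}^b_{BC}$-harmonic representative $\varphi$ of a truncated class; then $\partial_b\varphi=\overline{\partial}_b\varphi=0$, and since $\partial_b\overline{\partial}_b=-\overline{\partial}_b\partial_b$ this also forces $\partial_b\overline{\partial}_b\varphi=0$. The commutation \eqref{C6} then gives $\widetilde{\Delta}^b_{BC}(L\varphi)=0$, so $L\varphi\in\mathcal{H}^{r+1,s+1}_{b,BC}(M)$. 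The truncation hypothesis says $L\varphi\in\mathrm{Im}(\partial_b\overline{\partial}_b)$, and the orthogonal decomposition of Theorem \ref{thmain}(ii) forces $L\varphi=0$. Injectivity of $L$ in this range (Lemma \ref{surjD}) yields $\varphi=0$.

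For the high-degree case $r+s\geq n+2$, Proposition \ref{surjalpha} already yields surjectivity of $\alpha_{r,s}$, so only injectivity needs attention. Starting from $\varphi\in\mathcal{A}_b^{r,s}(M)$ with $\varphi=\partial_b\overline{\partial}_b\psi$ for some $\psi\in F_b^{r-1,s-1}(M)$, the goal is to find $\psi'\in\mathcal{A}_b^{r-1,s-1}(M)$ with $\partial_b\overline{\partial}_b\psi'=\varphi$. I observe that $\partial_b\overline{\partial}_b(L\psi)=L\varphi=0$, so $L\psi$ represents an Aeppli class; the Aeppli decomposition of Theorem \ref{thmainA}(ii) yields $L\psi=h_A+\partial_b\alpha+\overline{\partial}_b\beta$ with $h_A\in\mathcal{H}^{r,s}_{b,A}(M)$. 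On a compact Sasakian manifold the Sasakian Hodge identities give $\mathcal{H}^{r,s}_{b,A}(M)=\mathcal{H}^{r,s}_b(M)$, the transverse analogue of the equality of all four standard harmonic spaces in K\"ahler geometry; hence Lemma \ref{surjharmonicD} produces $\sigma\in\mathcal{H}^{r-1,s-1}_b(M)$ with $L\sigma=h_A$. Since $r+s\geq n+2$, Lemma \ref{surjD} also makes $L:F_b^{r-2,s-1}(M)\to F_b^{r-1,s}(M)$ and $L:F_b^{r-1,s-2}(M)\to F_b^{r,s-1}(M)$ surjective, so I can write $\alpha=L\alpha_0$ and $\beta=L\beta_0$. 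Setting $\psi'=\psi-\sigma-\partial_b\alpha_0-\overline{\partial}_b\beta_0$ gives $L\psi'=0$ by construction, and $\partial_b^2=\overline{\partial}_b^2=0$ combined with $\partial_b\overline{\partial}_b\sigma=0$ (as $\sigma$ is harmonic) yields $\partial_b\overline{\partial}_b\psi'=\varphi$, closing the injectivity argument.

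The step I expect to be the main obstacle is the identification $\mathcal{H}^{r,s}_{b,A}(M)=\mathcal{H}^{r,s}_b(M)$ used in the second case. One direct route is to reproduce the calculation of Proposition \ref{p2.3} for $\widetilde{\Delta}^b_A$ from the expression \eqref{III8} using the Sasakian Hodge identities (Lemma 7.2.7 of \cite{B-G}); alternatively, the $\star_b$-duality $H^{r,s}_{b,BC}(M)\cong H^{n-r,n-s}_{b,A}(M)$ recorded at the end of Subsection 4.1, combined with Theorem \ref{thmain}(iii), can transfer the surjectivity of $L$ on BC-harmonic forms across to the Aeppli harmonic spaces. The delicate bookkeeping on the allowed bidegree ranges of $L$ (which is exactly what forces the excluded values $r+s\in\{n,n+1\}$) is the other point that requires care, but it is otherwise a direct transplant of the de Rham argument.
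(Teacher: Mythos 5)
Your proposal follows essentially the same route as the paper: the low-degree case via vanishing of the coeffective groups plus the harmonic characterization of $\widetilde{H}^{r,s}_{b,BC}(M)$ and injectivity of $L$, and the high-degree case via the Aeppli decomposition of $L\psi$, the identification $\mathcal{H}^{r,s}_{b,A}(M)=\mathcal{H}^{r,s}_{b}(M)$, and Lemmas \ref{surjharmonicD} and \ref{surjD} to correct $\psi$ into a coeffective primitive. The only difference is that you spell out (and rightly flag as the delicate point) the equality of Aeppli and ordinary basic harmonic spaces, which the paper asserts without comment; your two suggested justifications are both viable.
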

\begin{proof}
Using an argument similar to that used in \cite{I} the proof it follows by two cases using the same technique as in Theorem \ref{main1dR}.

Case 1: $r+s\leq n-1$.

From \eqref{C2} we know that $H^{r,s}_{BC}(\mathcal{A}_b(M))=\{0\}$ for $r+s\leq n-1$. Moreover, from Theorem \ref{thmain} (the first isomorphism of (iii)), we have
\begin{equation}
\label{C8}
\widetilde{H}_{b,BC}^{r,s}(M)\cong\left\{\varphi\in\mathcal{H}_{b,BC}^{r,s}(M)\,|\,\varphi\wedge d\eta\in\partial_b\overline{\partial}_b\left(F_b^{r,s}(M)\right)\right\}\cong\left\{\varphi\in\mathcal{H}^{r,s}_{b,BC}(M)\,|\,\varphi\wedge d\eta=0\right\}.
\end{equation}
Thus, from Lemma \ref{surjD} we conclude that $\widetilde{H}^{r,s}_{b,BC}(M)=\{0\}$ for $r+s\leq n-1$. This finishes the proof for $r+s\leq n-1$.

Case 2: $r+s\geq n+2$.

We shall see that the mapping $\alpha_{r,s}$ given by \eqref{C4} is an isomorphism for $r+s\geq n+1$. From Proposition \ref{surjalpha}, it is suficient to show the injection.

Let $a\in H^{r,s}_{BC}(\mathcal{A}_b(M))$ such that $\alpha_{r,s}(a)=0$ in $\widetilde{H}_{b,BC}^{r,s}(M)$ and suppose that $\varphi$ is a representative  of $a$. Since $\alpha_{r,s}(a)=\alpha_{r,s}\left(\{\varphi\}_{BC}\right)=[\varphi]_{BC}=0$ in $\widetilde{H}_{b,BC}^{r,s}(M)$, there exists $\psi\in F_b^{r-1,s-1}(M)$ such that
\begin{displaymath}
\varphi=\partial_b\overline{\partial}_b\psi.
\end{displaymath}
Suppose $\psi\notin\mathcal{A}_b^{r-1,s-1}(M)$ (notice that if $\psi\in\mathcal{A}_b^{r-1,s-1}(M)$, then $a=0$ and we conclude the proof). Since $L$ commutes with $\partial_b$ and $\overline{\partial}_b$, then $\partial_b\overline{\partial}_b(L\psi)=L(\partial_b\overline{\partial}_b\psi)=L\varphi=0$; therefore $L\psi$ defines a basic Aeppli cohomology class $[L\psi]_A\in H^{r,s}_{b,A}(M)$. From the Theorem \ref{thmainA} (the isomorphism (iii)), we have
\begin{displaymath}
L\psi=\psi_1+\partial_b\gamma_1+\overline{\partial}_b\gamma_2,
\end{displaymath}
for $\psi_1\in \mathcal{H}^{r,s}_{b,A}(M)$, $\gamma_1\in F_b^{r-1,s}(M)$ and $\gamma_2\in F_b^{r,s-1}(M)$. Since $r+s\geq n+2$ and $\psi_1\in\mathcal{H}_{b,A}^{r,s}(M)=\mathcal{H}_b^{r,s}(M)$ by Lemma \ref{surjharmonicD} there exists $\psi_2\in\mathcal{H}_b^{r-1,s-1}(M)=\mathcal{H}_{b,A}^{r-1,s-1}(M)$ such that $L\psi_2=\psi_1$ and since $r+s-1\geq n+1$ by Lemma \ref{surjD} there exist $\sigma_1\in F_b^{r-2,s-1}(M)$ and $\sigma_2\in F_b^{r-1,s-2}(M)$ such that $\gamma_1=L\sigma_1$ and $\gamma_2=L\sigma_2$, respectively. Thus,
\begin{displaymath}
L(\psi-\psi_2-\partial_b\sigma_1-\overline{\partial}_b\sigma_2)=0\,\,{\rm and}\,\,\partial_b\overline{\partial}_b(\psi-\psi_2-\partial_b\sigma_1-\overline{\partial}_b\sigma_2)=\varphi.
\end{displaymath}
Then, $a=\{\varphi\}_{BC}$ is the zero basic class in $H^{r,s}_{BC}(\mathcal{A}_b(M))$ and this finishes the proof.
\end{proof}

Now, using the above result, by similar arguments as in the proof of Theorem \ref{thmain2D} we obtain a Hodge decomposition theorem for coeffective basic Bott-Chern cohomology of compact Sasakian manifolds. 

\begin{theorem}
\label{thmain2BC}
If $M$ is a compact Sasakian manifold of dimension $2n+1$ then we have
\begin{enumerate}
\item [i)] $\widetilde{H}^p(M)\cong\bigoplus_{r+s}^p\widetilde{H}_{b,BC}^{r,s}(M)$.

\item [ii)] $H^p(\mathcal{A}_b(M))\cong\bigoplus_{r+s}^pH^{r,s}_{BC}(\mathcal{A}_b(M))$ for $r+s\geq n+2$.
\end{enumerate}
\end{theorem}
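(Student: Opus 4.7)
The plan is to adapt the argument of Theorem \ref{thmain2D} to the Bott--Chern setting. The essential new ingredient, available on a compact Sasakian manifold, is Proposition \ref{p2.3}, which identifies the basic harmonic spaces $\mathcal{H}^{r,s}_b(M)$ and $\mathcal{H}^{r,s}_{b,BC}(M)$ and asserts that $d\eta$ is harmonic for every Laplacian in play; combined with Lemma \ref{eqlap} this guarantees both that $\Delta_b$ preserves the bigraduation and that every basic harmonic class decomposes into bidegree harmonic pieces that are simultaneously harmonic for $\widetilde{\Delta}_{BC}^b$.

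For (i), I would take $a\in\widetilde{H}^p_b(M)$ and pick its unique $\Delta_b$--harmonic representative $\varphi\in\mathcal{H}^p_b(M)$. The truncation condition $a\wedge[d\eta]=0$ says that $\varphi\wedge d\eta$ is $d_b$--exact; but $L$ preserves basic harmonic forms by relation \eqref{C5} (equivalently Lemma \ref{surjharmonic}), so $\varphi\wedge d\eta$ is both harmonic and exact, hence vanishes. Writing $\varphi=\sum_{r+s=p}\varphi_{r,s}$, Lemma \ref{eqlap} ensures each $\varphi_{r,s}$ is $\Delta_b$--harmonic, and since $d\eta$ is pure of bidegree $(1,1)$, matching bidegrees in $\sum \varphi_{r,s}\wedge d\eta=0$ forces $\varphi_{r,s}\wedge d\eta=0$ for every $(r,s)$. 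By Proposition \ref{p2.3} we then have $\varphi_{r,s}\in\mathcal{H}^{r,s}_{b,BC}(M)$, and the harmonic description \eqref{C8} used in the proof of Theorem \ref{thmain1BC} realises each $\varphi_{r,s}$ as an element of $\widetilde{H}^{r,s}_{b,BC}(M)$. This yields a well-defined map $\widetilde{H}^p_b(M)\to\bigoplus_{r+s=p}\widetilde{H}^{r,s}_{b,BC}(M)$; its inverse is the sum-of-components map, and both are bijective because the harmonic representatives agree by Proposition \ref{p2.3}.

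For (ii), I would chain together the three identifications already established: the isomorphism $H^p(\mathcal{A}_b(M))\cong\widetilde{H}^p_b(M)$ from Theorem \ref{main1dR} (valid for $p\neq n$, hence certainly for $p\geq n+2$), the decomposition of part (i), and the Bott--Chern isomorphism $\widetilde{H}^{r,s}_{b,BC}(M)\cong H^{r,s}_{BC}(\mathcal{A}_b(M))$ from Theorem \ref{thmain1BC} (valid for $r+s\notin\{n,n+1\}$, which is automatic when $r+s=p\geq n+2$). Composing gives the required decomposition.

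The main obstacle I anticipate is ensuring that the harmonic representatives align coherently across the de Rham, Dolbeault, and Bott--Chern cohomologies, since each a priori lives in a different quotient. This is exactly the content of Proposition \ref{p2.3} together with the Sasakian Hodge identities, so once those are in hand the argument reduces to the bigraduation bookkeeping sketched above.
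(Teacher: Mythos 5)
Your proposal is correct and follows essentially the same route as the paper, whose proof of Theorem \ref{thmain2BC} is precisely ``by similar arguments as in the proof of Theorem \ref{thmain2D}'': decompose the harmonic representative into bidegree components, use that $d\eta$ is of type $(1,1)$ and that $L$ preserves harmonics, and invoke Proposition \ref{p2.3} together with \eqref{C8} to pass to the Bott--Chern truncated groups. Your part (ii), chaining Theorem \ref{main1dR}, part (i), and Theorem \ref{thmain1BC} with the correct range $r+s\geq n+2$, matches the paper's argument as well.
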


Finally, let us denote by $c_{b,BC}^{r,s}(M)=\dim H^{r,s}_{BC}(\mathcal{A}_b(M))$. Then

\begin{corollary}
If $M$ is a compact Sasakian manifold of dimension $2n+1$, then
\begin{displaymath}
c_b^p(M)=\sum_{r+s=p}c_{b,BC}^{r,s}(M), \,\,{\rm for}\,\,r+s\geq n+2.
\end{displaymath}
\end{corollary}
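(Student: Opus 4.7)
The plan is to obtain this corollary as an immediate dimension-count on top of the Hodge-type isomorphism already established in Theorem \ref{thmain2BC}(ii). That theorem gives, for $p\geq n+2$, the decomposition
\[
H^p(\mathcal{A}_b(M))\cong\bigoplus_{r+s=p}H^{r,s}_{BC}(\mathcal{A}_b(M)),
\]
so the only work left is to observe that each side is finite dimensional and then take dimensions termwise.

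First I would justify finite dimensionality of both sides. For the left side, Proposition \ref{finitedim} combined with the compactness of $M$ (which guarantees that $H_b^p(M)$ is finite dimensional, via the transverse Hodge theorem and \eqref{harmoniciso}) yields $\dim H^p(\mathcal{A}_b(M))<\infty$ for $p\geq n+1$, and in particular for $p\geq n+2$; thus $c_b^p(M)$ is well defined. For each summand on the right, the coeffective basic Bott--Chern group $H^{r,s}_{BC}(\mathcal{A}_b(M))$ is isomorphic, via Theorem \ref{thmain1BC}, to the truncated basic Bott--Chern group $\widetilde{H}^{r,s}_{b,BC}(M)\subseteq H^{r,s}_{b,BC}(M)$, and the latter is finite dimensional by the corollary following Theorem \ref{thmain}. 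Hence $c_{b,BC}^{r,s}(M)=\dim H^{r,s}_{BC}(\mathcal{A}_b(M))<\infty$ for $r+s\geq n+2$.

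Next I would take dimensions of both sides of the isomorphism from Theorem \ref{thmain2BC}(ii). Since dimension is additive on finite direct sums of finite-dimensional vector spaces,
\[
c_b^p(M)=\dim H^p(\mathcal{A}_b(M))=\sum_{r+s=p}\dim H^{r,s}_{BC}(\mathcal{A}_b(M))=\sum_{r+s=p}c_{b,BC}^{r,s}(M),
\]
which is exactly the identity to be established, valid for $p\geq n+2$.

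There is essentially no technical obstacle here, since all the hard analytic content (ellipticity of the modified Bott--Chern and Aeppli Laplacians, the Hodge decompositions, the surjectivity of $L$ on harmonic representatives, and the identification of truncated cohomology with coeffective cohomology) has already been carried out in the preceding sections; the sole conceptual check is that the degree restriction $p\geq n+2$ of Theorem \ref{thmain2BC}(ii) is inherited by the corollary, and this is automatic. Thus the statement is a clean corollary of the Hodge-Bott--Chern decomposition for coeffective basic cohomology.
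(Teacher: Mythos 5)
Your proposal is correct and follows exactly the route the paper intends: the corollary is stated without proof precisely because it is the dimension count on the decomposition of Theorem \ref{thmain2BC}(ii), with finite dimensionality supplied by Proposition \ref{finitedim} on the left and by Theorem \ref{thmain1BC} together with the finite dimensionality of the basic Bott--Chern groups on the right. Nothing further is needed.
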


\section*{Acknowledgement} The first author is supported by the Sectorial Operational Program Human Resources Development (SOP HRD), financed from the European Social Fund and by the Romanian Government under the Project number POSDRU/159/1.5/S/134378 .

\noindent 
Cristian Ida\\
Department of Mathematics and Computer Science\\
University Transilvania of Bra\c{s}ov\\
Address: Bra\c{s}ov 500091, Str. Iuliu Maniu 50, Rom\^{a}nia\\
email:\textit{cristian.ida@unitbv.ro}
\medskip 

\noindent 
Paul Popescu\\
Department of Applied Mathematics\\
University of Craiova\\
Address: Craiova, 200585,  Str. Al. Cuza, No. 13,  Rom\^{a}nia\\
email:\textit{paul$_{-}$p$_{-}$popescu@yahoo.com}

\end{document}